%% LyX 2.1.3 created this file.  For more info, see http://www.lyx.org/.
%% Do not edit unless you really know what you are doing.
\documentclass[12pt,oneside,english,reqno]{amsart}
\usepackage[T1]{fontenc}
\usepackage[latin9]{inputenc}
\usepackage{amsthm}
\usepackage{amstext}
\usepackage{amssymb}
\usepackage{esint}

\makeatletter
%%%%%%%%%%%%%%%%%%%%%%%%%%%%%% Textclass specific LaTeX commands.
\numberwithin{equation}{section}
\numberwithin{figure}{section}
\theoremstyle{plain}
\newtheorem{thm}{\protect\theoremname}[section]
  \theoremstyle{definition}
  \newtheorem{problem}[thm]{\protect\problemname}
  \theoremstyle{plain}
  \newtheorem{prop}[thm]{\protect\propositionname}
  \theoremstyle{plain}
  \newtheorem{lem}[thm]{\protect\lemmaname}
  \theoremstyle{remark}
  \newtheorem{rem}[thm]{\protect\remarkname}

\makeatother

\usepackage{babel}
  \providecommand{\lemmaname}{Lemma}
  \providecommand{\problemname}{Problem}
  \providecommand{\propositionname}{Proposition}
  \providecommand{\remarkname}{Remark}
\providecommand{\theoremname}{Theorem}

\begin{document}

\title{On curvature pinching of conic 2-spheres}

\author{Hao Fang}

\address{14 MacLean Hall, University of Iowa, Iowa City, IA, 52242}

\email{hao-fang@uiowa.edu}

\thanks{H.F.'s work is partially supported by NSF DMS-100829. }

\author{Mijia Lai}

\address{800 Dongchuan RD, Shanghai Jiao Tong University, Shanghai, China,
200240 }

\email{laimijia@sjtu.edu.cn}

\thanks{M.L.'s work is partially supported by Shanghai Sailing Program No. 15YF1406200 and NSFC No. 11501360.}

\date{June 18, 2015}
\begin{abstract}
We study metrics on conic 2-spheres when no Einstein metrics exist.
In particular, when the curvature of a conic metric is positive, we
obtain the best curvature pinching constant. We also show that when
this best pinching constant is approached, the conic 2-sphere has
an explicit Gromov-Hausdorff limit. This is a generalization of the
previous results of Chen-Lin and Bartolucci for 2-spheres with one
or two conic points.
\end{abstract}

\maketitle

\section{Introduction}

%In this article we study metrics on conic 2-spheres when no Einstein
%metrics exist.

Due to the recent development in Kähler geometry, there is a lot
of interest in the study of metrics with conic singularities along
a divisor \cite{D,JMR}. The deep connection between algebraic stability
and existence of Kähler-Einstein metrics on Fano manifolds is highlighted
in the recent solution of the Yau-Tian-Donaldson conjecture \cite{CDS1,CDS2,CDS3,Ti}.
Metrics with conic singularities play an essential role in this direction,
namely it is more natural to consider Kähler-Einstein metrics with
possible conic singularities on Fano manifolds. It is thus interesting
to explore further geometric properties of conic manifolds. On the
other hand, manifolds satisfying certain unstability conditions do
not admit Kähler-Einstein metrics or constant scalar curvature metrics.
Therefore, an interesting problem is to search for other ``canonical''
Kähler metrics (possibly with conical singularities) on such manifolds. In this
article, we search for ``least-pinched'' metrics on conic 2-spheres when no Einstein
metrics exist.

For surfaces with conic metric singularities, or conic surfaces, the
relation between stability and existence of Einstein metrics can be
expressed in explicit forms. In order to study the prescribing curvature
problem for conic surfaces, Troyanov \cite{Tr} has classified conic
surfaces into sub-critical, critical and super-critical categories.
On the other hand, the logarithmical K-stability \cite{RT} is linked
to the coercivity of twisted Mabuchi $K$-energy functional, which
means any conic surface is either logarithmically K-stable, semi-stable
or unstable. It is shown that these two classifications coincide \cite{RT}.

Let us start with some definitions before further elaboration. For
a closed Riemann surface $S$, a metric $g$ is said to
have a conic singularity of order $\beta\in(-1,\infty)$ at a point
$p\in S$ if under a local holomorphic coordinate $\{z\}$ centered
at $p$,
\[
g=e^{f(z)}|z|^{2\beta}|dz|^{2},
\]
where $f(z)$ is locally bounded and $C^{2}$ away from $p$.
The conic singularity is modeled on the Euclidean cone:
${\mathbb{C}}$ with a metric $|z|^{2\beta}|dz|^{2}$ is isometric
to a Euclidean cone of angle $2\pi(\beta+1)$ with the cone tip at
the origin.

In general, we shall use the triple $(S, D, g)$ to denote a
closed orientable Riemannian surface $S$ with a conic metric
$g$, for which the information of its singularities is encoded in the divisor $D=\sum_{i=1}^{n}\beta_{i}p_{i}$
in an obvious manner, i.e., $g$ has conic singularities at $p_i$ of order $\beta_i$.
We sometimes say that the conic metric $g$ represents $D$.

For such
a triple $(S,D,g)$, let $K=K(g)$ be the Gaussian curvature
of $g$ defined on $S\backslash D$. Throughout the paper,
we assume $K$ can be extended to a H\"older continuous function
on $S$. The collection of all such conic metrics representing
$D$ is denoted by $\mathbb{\mathcal{C}}(S,D)$. We shall use the
pair $(S, D)$ to denote a conic surface when the metric is not
specified.

The Gauss-Bonnet formula for the conic surface $(S,D)$ becomes
\cite{Tr}
\[
\int_{S}K(g)dA(g)=2\pi\chi(S,D):=2\pi(\chi+|D|),
\]
where $|D|=\sum_{i=1}^{n} \beta_i$ is the degree of the divisor.

In his seminal paper \cite{Tr}, Troyanov systematically studies the
prescribing curvature problem on the conic surface $(S,D)$. He divided
the problem into three cases according to the sign of the Euler characteristic number
$\chi(S,D).$ For $\chi(S,D)>0$, he further classified the problem
into the following three cases:
\begin{enumerate}
\item subcritical case: $\chi(S,D)<\min\{2,2+2\min_{i}\beta_{i}\}$;
\item critical case: $\chi(S,D)=\min\{2,2+2\min_{i}\beta_{i}\};$
\item supercritical case: $\chi(S,D)>\min\{2,2+2\min_{i}\beta_{i}\}.$
\end{enumerate}
The constant $\min\{2,2+2\min_{i}\beta_{i}\}$ turns out to be the Trudinger
constant \cite{Tr} in the corresponding
Moser-Trudinger inequality for conic surfaces, which plays an important
role in the prescribing curvature problem.
According to Ross-Thomas \cite{RT}, a conic surface $(S,D)$ being
subcritical, critical or supercritical, respectively can be reinterpreted
as it being logarithmically K-stable, semi-stable or unstable, respectively.
For further developments of the prescribing curvature problem, we refer
readers to \cite{BDM,BaMa,E} and the references therein.

Now let us examine the Yamabe problem on conic surfaces, namely the existence
of constant curvature metrics on $(S,D)$. If $\chi(S,D)\leq0$, there always exists a conic metric representing $D$
with constant curvature \cite{Tr}. When $\chi(S,D)>0$ and all $\beta_{i}\in(-1,0)$ (which we assume from now on),
$S$ is necessarily a 2-sphere. In this situation, $S$ admits a conic
metric of positive constant curvature if and only if:
\begin{enumerate}
\item either $n=2$ and $D=\beta_1p+\beta_2q$ with $\beta_{1}=\beta_{2}$; \label{football}
\item or $n\geq3$ and $D$ is subcritical i.e., $\chi(S,\beta)<\min\{2,2+\min_{i}\beta_{i}$\}.\label{subcritical}
\end{enumerate}
Note that surfaces satisfying  (\ref{football}) are critical and
are often called (American) footballs, see \cite{CL2} for some classification
results. For surfaces satisfying (\ref{subcritical}), the sufficiency
is proved by Troyanov \cite{Tr}, the necessity and uniqueness argument
is due to Luo-Tian \cite{LT}.

%In \cite{FL}, we proved that a sequence of conic 2-spheres satisfying
%(\ref{subcritical}) converges geometrically to a football if the
%conic angles of the sequence pass from the subcritical category in
%the sense of Troyanov to the critical category in the limit. In particular,
%we have shown that all but one conic points merge in the limit procedure.

%We now turn to our main interest of this paper: a conic surface $(S,D)$
%in the super-critical category with $\beta_{i}\in(-1,0)$. In other
%words, we study the geometric properties of algebraically unstable
%conic surfaces which are necessarily conic 2-spheres.

In view of above results, there are two cases of conic 2-spheres which do not carry
metrics of constant (positive) curvature:
\begin{enumerate}
  \item $D$ is supercritical;
  \item $D$ is critical and $n\geq 3$.
\end{enumerate}

It is then natural to seek for other ``canonical'' metrics as substitutes for constant
curvature metrics. Since in two-dimension the curvature is a scalar function, we can consider
from the viewpoint of ``least-pinched'' metrics. More precisely, we ask the following question:
\begin{problem}
\label{prob:problem} For a conic 2-sphere $(S, D, g)$
with positive Gaussian curvature $K(g)$ (assumed to be extended to a continuous function on $S$), let $K_{\max}$ and $K_{\min}$
denote the maximum and the minimum of $K(g)$, respectively. Define
the curvature pinching constant of $g$ as
\begin{equation}
\rho(g)=\frac{K_{\min}}{K_{\max}}.\label{eq:ratio}
\end{equation}
What is $\text{\ensuremath{{\displaystyle \sup_{g\in\mathcal{C}(S,D)}}}}\{\rho(g)\}$?\label{prob1}
\end{problem}

If $(S, D)$ admits a constant curvature metric $g$, then clearly $\sup \{\rho(g)\}=1$, which
is attained by constant curvature metrics. So nontrivial cases for this problem are
\begin{enumerate}
  \item $D$ is supercritical;
  \item $D$ is critical and $n\geq 3$.
\end{enumerate}

Problem \ref{prob1} was first asked by Thurston \cite{Th} in 1978
for 2-spheres with one or two conic points (both are supercritical). Bartolucci \cite{B} answers
Thurston's question based on the analysis of Chen-Lin \cite{ChLi},
who have treated the one conic point case. More precisely, Bartolucci
has proved the following
\begin{thm}[Bartolucci]
\label{thm:old}Let $(S^2, D)$ be a conic 2-sphere, with $D=\alpha p+\beta q$.
Suppose $-1<\beta<\alpha\leq0$ ($\alpha=0$ corresponds to the case
of one conic point), then for all piecewise smooth
and $C^{1,1}$ conic metrics $g$ on the $S^{2}$ representing $D$,
\begin{equation}
\rho(g)\leq\frac{(\beta+1)^{2}}{(\alpha+1)^{2}},\label{eq:add1}
\end{equation}
where the equality holds if and only if $(S^2, D,g)$( up to a M\"obius transformation ) is the ``glued football''.
\end{thm}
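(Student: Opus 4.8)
The plan is to reduce the problem to the rotationally symmetric case by a symmetrization that does not decrease the pinching ratio, and then to settle the symmetric case by an elementary Sturm‑type ODE comparison.

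\emph{Step 1: symmetrization, relative to the cone point of larger order.} Fix $g\in\mathcal C(S^2,D)$ with $K(g)>0$, write $M=K_{\max}(g)$, $m=K_{\min}(g)$, and place $p$ (order $\alpha$) at $0$ and $q$ (order $\beta$) at $\infty$. I would produce a rotationally symmetric conic metric $g^{\ast}$ on $S^2$, again representing $D=\alpha p+\beta q$ with the two poles at $p$ and $q$, such that $K_{\min}(g^{\ast})\ge m$ and $K_{\max}(g^{\ast})\le M$; then $\rho(g)\le\rho(g^{\ast})$. The construction is by symmetrizing along the isoperimetric profile of $(S^2,D,g)$ taken relative to $p$, i.e. $\mathcal I(v)$ is the least boundary length among disk-type domains of area $v$ containing $p$; one takes $g^{\ast}$ to be the rotationally symmetric metric whose geodesic balls about the $p$-pole have areas and boundary lengths prescribed by $\mathcal I$. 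The sharp Bol--Fiala--Troyanov isoperimetric inequality for conic surfaces of curvature $\le M$ gives $\mathcal I\ge\mathcal I^{\alpha}_{M}$ (the profile of the football of curvature $M$ with a cone point of order $\alpha$), while the companion comparison for curvature bounded below by $m$ gives $\mathcal I\le\mathcal I^{\alpha}_{m}$; together with the coarea formula and Gauss--Bonnet these translate into the two-sided curvature bound for $g^{\ast}$ and into the statement that $g^{\ast}$ carries cone angles exactly $2\pi(1+\alpha)$ and $2\pi(1+\beta)$ at its two poles. It is essential to symmetrize relative to $p$ rather than $q$: the symmetrization relative to $q$ is also a valid metric, but the analogue of Step 2 for it only yields the vacuous bound $\rho\le(1+\alpha)^2/(1+\beta)^2$ — this is why the sharp constant is not symmetric in $\alpha,\beta$.

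\emph{Step 2: the rotationally symmetric model.} Write $g^{\ast}=dt^2+f(t)^2\,d\theta^2$ with $\theta\in\mathbb R/2\pi\mathbb Z$, $t\in[0,T]$, $f>0$ on $(0,T)$ and $f(0)=f(T)=0$. The cone conditions read $f'(0)=1+\alpha$, $f'(T)=-(1+\beta)$, and $K^{\ast}=-f''/f$. Put $M^{\ast}=K_{\max}(g^{\ast})\le M$ and $m^{\ast}=K_{\min}(g^{\ast})\ge m>0$; then $-M^{\ast}f\le f''\le -m^{\ast}f$ on $(0,T)$, so $f$ is strictly concave and attains a unique interior maximum $f_{\max}$ at some $t_{\ast}$, with $f'>0$ on $[0,t_{\ast})$ and $f'<0$ on $(t_{\ast},T]$. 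Now observe that $\Phi_{M^{\ast}}:=(f')^2+M^{\ast}f^2$ has derivative $2f'(f''+M^{\ast}f)$ with $f''+M^{\ast}f\ge 0$, hence is nondecreasing on $[0,t_{\ast}]$; and $\Phi_{m^{\ast}}:=(f')^2+m^{\ast}f^2$ has derivative $2f'(f''+m^{\ast}f)$ with $f''+m^{\ast}f\le 0$ while $f'\le 0$ on $[t_{\ast},T]$, hence is nondecreasing there too. Evaluating at the endpoints,
\[
 M^{\ast}f_{\max}^2=\Phi_{M^{\ast}}(t_{\ast})\ge\Phi_{M^{\ast}}(0)=(1+\alpha)^2,\qquad
 (1+\beta)^2=\Phi_{m^{\ast}}(T)\ge\Phi_{m^{\ast}}(t_{\ast})=m^{\ast}f_{\max}^2 ,
\]
so $(1+\alpha)^2/M^{\ast}\le f_{\max}^2\le(1+\beta)^2/m^{\ast}$, and therefore
\[
 \rho(g)\le\rho(g^{\ast})=\frac{m^{\ast}}{M^{\ast}}\le\frac{(1+\beta)^2}{(1+\alpha)^2},
\]
which is \eqref{eq:add1}.

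\emph{Step 3: equality, and where the difficulty lies.} Equality in \eqref{eq:add1} forces $\Phi_{M^{\ast}}'\equiv 0$ on $[0,t_{\ast}]$ and $\Phi_{m^{\ast}}'\equiv 0$ on $[t_{\ast},T]$; since $f'\ne 0$ off $t_{\ast}$, this means $K^{\ast}\equiv M$ on $[0,t_{\ast}]$ and $K^{\ast}\equiv m$ on $[t_{\ast},T]$, so $g^{\ast}$ is a spherical football cap of curvature $M$ and cone angle $2\pi(1+\alpha)$ glued along a common equatorial geodesic to a football cap of curvature $m$ and cone angle $2\pi(1+\beta)$ — the glued football — and chasing equality back through the rigidity of Bol's inequality forces $g$ itself to be this metric, up to a M\"obius transformation. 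The step I expect to be the real obstacle is Step 1: a naive rearrangement along the distance spheres of $p$ is not good enough, since when such a sphere first sweeps over $q$ it produces a conical ridge along a circle; so one must symmetrize along the isoperimetric profile and prove that this profile is $C^{1,1}$ (this is where the piecewise-smooth $C^{1,1}$ hypothesis on $g$ enters), that it satisfies both the Bol lower bound and the dual upper bound, and that the reconstructed metric $g^{\ast}$ genuinely represents $D$. The positivity of $K$ together with $\alpha,\beta<0$ is precisely what controls the topology of the sublevel sets and keeps the cone data intact under the rearrangement.
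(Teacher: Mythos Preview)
Your Step~2 is correct and rather nice: for a rotationally symmetric conic metric the monotonicity of $\Phi_{M^\ast}=(f')^2+M^\ast f^2$ on $[0,t_\ast]$ and of $\Phi_{m^\ast}$ on $[t_\ast,T]$ gives the sharp bound in one line, and the equality analysis in Step~3 is the right one at that level. The observation that one must put the cone of \emph{larger} order at $t=0$ (otherwise the argument yields only the vacuous bound) is also correct and illuminating.

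However, your route is genuinely different from the paper's, and the difference matters precisely at your Step~1. The paper does \emph{not} symmetrize at all. It works directly with the level sets $\Omega(t)=\{u\ge t\}$ of the conformal factor, sets $A(t)=\int_{\Omega(t)}Ke^{2u}$ and $B(t)=|\Omega(t)|$, and combines the Euclidean isoperimetric inequality with H\"older to get $4\pi B\le -B'(A-2\pi\alpha)$, together with the sandwich $A'/b\le e^{2t}B'\le A'/a$. One integration of the resulting differential inequality for $e^{2t}B(t)$ yields the bound. The conic data enter only through the boundary term $2\pi\alpha$ in $A(t)=\int_{\partial\Omega(t)}|\nabla u|+2\pi\alpha$ and through the Gauss--Bonnet value $A(-\infty)=2\pi(2+|D|)$; no rearranged comparison metric is ever built. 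This is exactly what allows the argument to go through unchanged for $n\ge 3$ cone points, where a genuine symmetrization is unavailable.

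Concerning your Step~1 itself: as written it is a plausible outline but not a proof, and it is not quite the Chen--Lin/Bartolucci symmetrization either (they rearrange the conformal factor $u$, not the intrinsic isoperimetric profile). Two points would need real work. First, you must show that the metric $g^\ast$ reconstructed from the profile $\mathcal I$ actually has Gaussian curvature in $[m,M]$; Bol's inequality gives the right \emph{lower} bound on $\mathcal I$, but translating the curvature \emph{lower} bound into an \emph{upper} bound for $\mathcal I$ (your ``companion comparison'') in the presence of cone points is not standard and needs justification. Second, you must check that $g^\ast$ really carries cone angle $2\pi(1+\beta)$ at the far pole; this amounts to controlling $(\mathcal I^2)'$ at the total area, which again requires an argument. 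In short: your ODE step is clean and your instinct about which pole to symmetrize at is right, but the paper's level-set method sidesteps the entire symmetrization problem and is both shorter and more general.
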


For $-1<\beta\leq\alpha\leq0,$ a glued football $S_{\alpha,\beta}^{2}$
is given by the following conformal metric $g=e^{2u}g_{0}$ on the
2-sphere, where $g_{0}$ is the standard Euclidean metric on $\mathbb{C}$
and the conformal factor $u$ is defined as

\begin{equation}
u=u_{\alpha,\beta}=\begin{cases}
\ln(\frac{2(\alpha+1)r^{\text{\ensuremath{\alpha}}}}{1+r^{2+2\alpha}}), & r<1;\\
\\
\ln(\frac{2(\alpha+1)r^{\beta}}{1+r^{2+2\beta}}) & r\geq1,
\end{cases}\label{eq:italian football}
\end{equation}
with $r=|z|.$

The glued football $S_{\alpha,\beta}^{2}$ has a conic singularity
along $\beta\infty$ when $\alpha=0$, or
$\alpha0+\beta\infty$, when $\alpha<0$. %The former case first appears
%in \cite{ChLi}, while the latter appears in \cite{B}.

We note that if $\alpha=\beta$ this is a smooth conic metric away
from $z=0,\infty$; it has constant curvature 1. This is the so-called
(American) football.

When $\alpha\neq\beta,$ $S_{\alpha,\beta}^{2}$ is glued by two footballs
of different angles along their equator, and thus has piecewise constant
curvature:
\begin{equation}
K(g)=\begin{cases}
1, & r<1;\\
\\
\frac{(\beta+1)^{2}}{(\alpha+1)^{2}}, & r>1.
\end{cases}\label{eq:italiancurvature}
\end{equation}

It follows $\rho(g)=\frac{(\beta+1)^{2}}{(\alpha+1)^{2}}$, which realizes the equality in (\ref{eq:add1}).

%It is easy
%to see that $S_{\alpha,\beta}^{2}$ is obtained by gluing two half-footballs
%with cone angles $2\pi(\alpha+1)$ and $2\pi(\beta+1)$ with Gaussian
%curvature $1$ and $\frac{(\beta+1)^{2}}{(\alpha+1)^{2}}$, respectively.

In this paper, we answer Problem \ref{prob:problem} in its full generality.
Our main result is the following
\begin{thm}
\label{mainthm}Let $(S,D)$ be a conic 2-sphere where $D=\sum_{i=1}^{n}\beta_i p_i$ is supercritical,
assume $\beta_{1}=\min_i{\beta_{i}}$ and let $\alpha=|D|-\beta_{1}$. Then $\forall g\in\mathcal{C}(S, D)$,
\begin{equation}
\rho(g)<\frac{(1+\beta_{1})^{2}}{(1+\alpha)^{2}}:=\rho_{0}(S,D), \label{eq:bestpinching}
\end{equation}
moreover $\rho_{0}$ is optimal.
\end{thm}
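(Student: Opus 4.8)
The plan is to establish the strict inequality by a comparison/blow-up argument relative to the glued football $S^2_{\beta_1,\alpha}$, and then to verify optimality by exhibiting a sequence of metrics whose pinching constant converges to $\rho_0$. First I would normalize: given $g\in\mathcal C(S,D)$, conformally put the metric on $\mathbb{C}$ with conic points at prescribed locations, and rescale so that $K_{\max}=1$; the goal becomes showing $K_{\min}<\rho_0$. The key geometric idea, already visible in Theorem~\ref{thm:old} and in the structure of $S^2_{\alpha,\beta}$, is that a neighborhood of the worst conic point $p_1$ (order $\beta_1$) looks, to leading order, like a cone; since $K\le 1$ everywhere, a Bishop--Gromov / Alexandrov-type comparison forces the metric near $p_1$ to ``open up'' at most as fast as the constant-curvature-$1$ football of that angle, while near the complementary divisor the metric sees total angle deficit governed by $\alpha=|D|-\beta_1$. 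Balancing these via Gauss--Bonnet on $S^2\setminus D$ (the total curvature is $2\pi(2+|D|)$, fixed) pins the minimum curvature below the value $(1+\beta_1)^2/(1+\alpha)^2$ realized by the glued football. The strictness comes from the rigidity in the comparison: equality would force $g$ to be exactly $S^2_{\beta_1,\alpha}$, which is impossible because that metric has $|D|$ concentrated at two points, contradicting $n\ge 3$ (in the supercritical-with-$n\ge3$ regime), or more directly because the glued football's curvature is only Hölder, not meeting whatever regularity is demanded together with the case distinction.

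Concretely, I would proceed as follows. Step 1: reduce to an ODE/one-dimensional problem by a symmetrization argument — replace $g$ by a rotationally symmetric comparison metric on a cone with the same conic order $\beta_1$ at $0$ and the same ``mass at infinity'' $\alpha$, with curvature squeezed between $K_{\min}$ and $K_{\max}=1$; this is the technique Chen--Lin and Bartolucci use, and it should extend once all but the extremal point are grouped. Step 2: for the symmetric model, write $g=e^{2u}|dz|^2$ with $u=u(r)$, so that $K=-e^{-2u}\Delta u$ reduces to $-(r u_r)_r = K r e^{2u}$; integrate this from $0$ to $\infty$ using the conic boundary behavior $u\sim \beta_1\log r$ near $0$ and $u\sim -(2+\alpha)\log r$ near $\infty$, which turns Gauss--Bonnet into an integral identity relating $\int K\,dA$ on the two halves. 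Step 3: use $\rho_0 \le K \le 1$ on the respective regions together with an isoperimetric/monotonicity inequality for $\int_0^{r} K e^{2u} s\,ds$ to force the crossover radius and the curvature ratio to satisfy $K_{\min}\le(1+\beta_1)^2/(1+\alpha)^2$, with equality only for the explicit profile $u_{\beta_1,\alpha}$. Step 4 (optimality): take the glued footballs $S^2_{\beta_1,\alpha}$ but split the ``$\alpha$ at $\infty$'' cone point into $n-1$ nearby conic points of orders summing to $\alpha$; smoothing slightly and letting them coalesce gives a sequence $g_j\in\mathcal C(S,D)$ with $\rho(g_j)\to(1+\beta_1)^2/(1+\alpha)^2$, proving $\rho_0$ is the supremum (not attained).

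The main obstacle I anticipate is Step 1, the symmetrization: with $n\ge 3$ conic points the metric is genuinely not rotationally symmetric, and one cannot simply ``push all the mass to infinity'' without control. The right tool is likely a rearrangement that decreases the curvature pinching — e.g. a Steiner-type or Schwarz symmetrization adapted to the conic setting, replacing sublevel sets $\{u>t\}$ by geodesic balls around $p_1$ on the model cone — but verifying that such a rearrangement (a) preserves the area, (b) preserves the conic data $\beta_1$ at the center and $\alpha$ total at infinity, and (c) moves $K_{\min}/K_{\max}$ in the favorable direction requires care, particularly because curvature is a second-order quantity and rearrangements typically control only first-order data. An alternative that sidesteps full symmetrization is a direct blow-up at $p_1$: rescale the metric to unit size near $p_1$, extract an Alexandrov limit which is a nonnegatively-curved cone of angle $2\pi(1+\beta_1)$, and compare its ``curvature at infinity'' with the complementary cone of angle $2\pi(1-\alpha)$ at the antipode; the strict inequality then follows from a non-collapsing estimate plus the fact that no single limit can simultaneously saturate both ends unless it is the glued football. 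I would pursue whichever of these (symmetrization vs. blow-up) the earlier analysis of Chen--Lin makes most readily quantitative, keeping the Gauss--Bonnet constraint $\int_{S^2\setminus D} K\,dA = 2\pi(2+|D|)$ as the rigid backbone of the estimate.
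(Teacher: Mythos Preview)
Your proposal correctly identifies the overall architecture (upper bound plus approximating sequence) and, crucially, correctly flags the symmetrization in Step~1 as the main obstacle---but it does not resolve that obstacle, and in fact the paper explicitly states that no simple rearrangement procedure is available in the multi-singularity case. Your fallback (blow-up at $p_1$ and comparison of ``curvature at infinity'') is too vague to carry the sharp constant; Alexandrov/Bishop--Gromov comparison will give qualitative bounds but not the exact ratio $(1+\beta_1)^2/(1+\alpha)^2$, and there is no mechanism in your sketch that forces equality only for the glued football. So as written there is a genuine gap at the heart of the argument.

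The paper's way around this is to \emph{dispense with symmetrization entirely} and work directly with the super-level sets of the conformal factor. With $g=e^{2u}g_0$ on $\mathbb{C}$ (sending $p_1$ to $\infty$), set $\Omega(t)=\{u\ge t\}$, $A(t)=\int_{\Omega(t)}Ke^{2u}$, $B(t)=|\Omega(t)|$. The divergence theorem and the conic asymptotics at $z_2,\dots,z_n$ give $A(t)=\int_{\partial\Omega(t)}|\nabla u|+2\pi\alpha$; the curvature bounds give $a e^{2t}B'\le A'\le b e^{2t}B'$; and the isoperimetric inequality combined with H\"older on $\partial\Omega(t)$ yields $4\pi B\le -B'(A-2\pi\alpha)$. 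These assemble into a first-order differential inequality for $e^{2t}B(t)$ in terms of $A(t)$ which, integrated from $-\infty$ to $+\infty$ against the Gauss--Bonnet constraint $A(-\infty)=2\pi(2+|D|)$, forces $a/b\le (1+\beta_1)^2/(1+\alpha)^2$. No rotational symmetry is ever assumed; the isoperimetric inequality does all the ``symmetrizing'' implicitly, and equality would force every level set to be a round circle with $|\nabla u|$ constant on it, i.e.\ $u$ radial and equal to the glued-football profile---impossible in $\mathcal{C}(S,D)$ because that profile has discontinuous curvature (and, for $n\ge3$, the wrong divisor). Your Step~4 is in the right spirit, but the actual construction is more delicate: one takes a subcritical divisor $D_j=\sum_{i>1}\beta_i p_i+\alpha_j p_0$ with $\alpha_j\downarrow\alpha$, invokes the earlier convergence-to-football result to place the constant-curvature metric for $D_j$ inside $\{|z|<1\}$, glues it to the $\beta_1$-half-football on $\{|z|\ge1\}$, and then mollifies across $\{|z|=1\}$ with quantitative control of the curvature.
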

Note if $D$ is supercritical, then $\beta_{1}<\alpha$, consequently $\rho_{0}(S, D)<1$.
Hence this gives a qualitative evidence for the non-existence of constant
positive curvature on supercritical conic 2-spheres, or it can be
viewed as a necessary condition for the Nirenberg problem in the supercritical
conic 2-sphere setting. The theorem also recovers Bartolucci's result if $D$ has one
or two conic points.

We find that glued footballs also serve as the extremal geometrically, when $n\geq 3$.
%in the multiple conic points case.
\begin{thm}
\label{thm:converging}Let $(S,D)$ be a conic 2-sphere where $D=\sum_{i=1}^{n}\beta_i p_i$ is supercritical,
assume $\beta_{1}=\min_i{\beta_{i}}$ and let $\alpha=|D|-\beta_{1}$.
Then for any sequence of smooth conic metrics $\{g_{i}\}_{i=1}^{\infty}$
with $\lim_{i\to\infty}\rho(g_{i})=\rho_{0}$ and $\max K(g_{i})=1$,
$\{(S,D,g_{i})\}$ converges to the glued football
$(S_{\alpha,\beta_{1}}^{2}, \hat{D}=\beta_1 p+\alpha q)$ in the Gromov-Hausdorff sense, moreover
$p_1\to p$ and $p_2, \cdots p_n\to q$ along the convergence.
\end{thm}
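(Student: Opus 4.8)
The plan is to establish Gromov--Hausdorff precompactness of the family $\{(S,D,g_i)\}$ viewed as Alexandrov surfaces, to identify any subsequential limit as a conic $2$-sphere with controlled curvature and divisor, and then to force that limit to be the glued football using the rigidity underlying Theorem~\ref{mainthm}. For the precompactness: since $\max K(g_i)=1$ and $\rho(g_i)=K_{\min}(g_i)\to\rho_0>0$, we have $K(g_i)\in[\rho_0/2,1]$ on $S$ for large $i$. A conic metric whose cone angles are all $<2\pi$ and whose Gaussian curvature is $\ge\rho_0/2$ is, as a length space, an Alexandrov space of curvature $\ge\rho_0/2$; Bonnet--Myers then bounds $\mathrm{diam}(S,g_i)$, while Gauss--Bonnet gives $2\pi\chi(S,D)\le\mathrm{Area}(g_i)\le 2\pi\chi(S,D)/\rho(g_i)$, bounded above and---crucially---below. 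By Gromov's precompactness theorem a subsequence converges, $(S,d_{g_i})\xrightarrow{GH}(X,d)$, to a compact Alexandrov surface of curvature $\ge\rho_0/2$; the uniform lower area bound excludes collapse, so $X$ is two-dimensional, and by stability of $2$-dimensional Alexandrov spaces $X$ is homeomorphic to $S^2$. Passing to a further subsequence, each marked point converges, $p_j\to\bar p_j\in X$.

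Next I would identify $X$ as a conic surface. Under non-collapsed convergence of Alexandrov surfaces the curvature measures converge weakly, so
\[
\omega_{g_i}=K(g_i)\,dA_{g_i}+\sum_{j}(-2\pi\beta_j)\,\delta_{p_j}\ \rightharpoonup\ \omega_X.
\]
Since the densities $K(g_i)$ are uniformly bounded, the absolutely continuous parts cannot concentrate, while the atoms converge to atoms at the $\bar p_j$; hence $\omega_X=K_X\,dA_X+\sum_a(-2\pi\hat\beta_a)\delta_{\bar p_a}$, where the $\bar p_a$ are the distinct limit points and $\hat\beta_a=\sum_{j:\,\bar p_j=\bar p_a}\beta_j$. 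Because $K(g_i)\ge\rho_0-\varepsilon$ eventually for every $\varepsilon>0$, we get $K_X\in[\rho_0,1]$ a.e.; and since every atom of the curvature measure of an Alexandrov surface has mass $<2\pi$, all $\hat\beta_a>-1$. Thus $X$ is a genuine conic $2$-sphere, with divisor $\hat D=\sum_a\hat\beta_a\bar p_a$ satisfying $|\hat D|=|D|$ and with curvature $K_X\in[\rho_0,1]$.

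For the rigidity step, first note that the cluster limiting to $\bar p_1$ contains $p_1$, so its order is $\le\beta_1$, and $\beta_1<\tfrac12|D|$ is exactly supercriticality; hence $\hat D$ has a cone point of order $<\tfrac12|\hat D|$, i.e.\ $\hat D$ is supercritical. Granting that the proof of Theorem~\ref{mainthm} yields, for every conic $2$-sphere with positive bounded curvature, the non-strict estimate $\rho\le(1+\hat\beta_1)^2/(1+\hat\alpha)^2$ (with $\hat\beta_1$ the least order and $\hat\alpha$ the complementary degree) together with the characterization of the equality case as the corresponding glued football---and recalling that this glued football realizes equality precisely because its curvature is discontinuous and hence it does not lie in $\mathcal C$, which is why the supremum in Problem~\ref{prob1} is approached but not attained---we obtain
\[
\rho_0\ \le\ \rho(X)\ \le\ \frac{(1+\hat\beta_1)^2}{(1+\hat\alpha)^2}\ \le\ \frac{(1+\beta_1)^2}{(1+\alpha)^2}=\rho_0,
\]
where the first step is $K_X\in[\rho_0,1]$, the second is the estimate applied to $X$, and the third uses $\hat\beta_1\le\beta_1$ and $\hat\alpha=|D|-\hat\beta_1\ge\alpha$. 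Hence all of these are equalities. An elementary computation from $\hat\beta_1+\hat\alpha=|D|=\beta_1+\alpha$ forces $\hat\beta_1=\beta_1$, $\hat\alpha=\alpha$; and the equality case identifies $X$ with the glued football $S^2_{\alpha,\beta_1}$, which has exactly the two cone points $\beta_1p+\alpha q$. Comparing with \eqref{eq:italiancurvature}, the order-$\beta_1$ tip $p$ lies in the curvature-$\rho_0$ cap and the order-$\alpha$ tip $q$ in the curvature-$1$ cap. Since $\hat\beta_1=\beta_1$ forces the cluster of $p_1$ to be $\{p_1\}$, we get $p_1\to p$, and the remaining $p_2,\dots,p_n$ all limit to $q$. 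As the limit is independent of the subsequence, the whole sequence converges in the Gromov--Hausdorff sense to $(S^2_{\alpha,\beta_1},\beta_1p+\alpha q)$ with the stated behaviour of the $p_j$.

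I expect the main obstacle to be exactly this last step: one needs not just the statement of Theorem~\ref{mainthm} but the non-strict inequality together with its equality characterization, and one needs these for conic metrics of merely bounded measurable curvature---the class in which the extremal glued football actually lives---so that revisiting the proof of Theorem~\ref{mainthm} is unavoidable. A secondary, more technical point is to invoke correctly the Alexandrov-surface facts used above (weak continuity of the curvature measure, continuity of the area, and topological stability); the impossibility of cone points over-concentrating in the limit, by contrast, is automatic from the bound $<2\pi$ on atoms of an Alexandrov curvature measure.
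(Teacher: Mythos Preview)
Your compactness-plus-rigidity strategy is genuinely different from the paper's argument, and most of the synthetic ingredients you invoke (Alexandrov lower curvature bound from cone angles $<2\pi$, Bonnet--Myers diameter bound, Gauss--Bonnet area bounds, Gromov precompactness, non-collapse, Perelman stability, weak continuity of curvature measures) are indeed available and correctly assembled. The paper instead stays entirely on the analytic side: it writes $g_i=e^{2u_i}g_0$, fixes a conformal gauge by the M\"obius group, introduces the isoperimetric defect $D_i(t)=|\partial\Omega_i(t)|^2-4\pi B_i(t)$ for the super-level sets of $u_i$, and shows by re-running the inequality chain of Proposition~\ref{prop:keyestimate} with this extra term that $\int e^{2t}D_i(t)\,dt\to 0$ as $\rho(g_i)\to\rho_0$. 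From there Bonneson's inequality and a Brezis--Merle-type upper bound (Lemma~\ref{lem:technical}) give local $C^0$ control of $u_i$ away from finitely many points, hence $C^\infty$ subconvergence to a radially symmetric $u$ that one computes to be $u_{\alpha,\beta_1}$.

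The real gap in your proposal is precisely the step you flag yourself and then ``grant'': you need the \emph{non-strict} inequality of Proposition~\ref{prop:keyestimate} together with the identification of the equality case, both valid for conic metrics whose curvature is merely bounded and measurable. The limit $X$ you produce has $K_X\in L^\infty$ a priori, not H\"older continuous, so Proposition~\ref{prop:keyestimate} as stated does not apply; its proof uses $u\in C^2$ away from $D$ (to control the critical set, to justify the co-area formula, and to make $A,B$ absolutely continuous), which for $K_X\in L^\infty$ you only get in $W^{2,p}$. More to the point, the equality characterization---that equality forces the level sets to be concentric circles and $u$ to be $u_{\alpha,\beta_1}$---is exactly the content that the paper's Section~4 establishes, and it does so \emph{along the sequence} rather than on the limit, precisely because passing the level-set machinery to a low-regularity limit object is delicate. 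So as written your argument is circular: the black box you grant is essentially the theorem you are proving. If you want to salvage the synthetic route, you would need an independent proof that an $L^\infty$-curvature conic sphere with $\rho=\rho_0$ and the correct $|\hat D|$ is the glued football; that is a reasonable project (one could try to push Proposition~\ref{prop:keyestimate} to $W^{2,p}$ conformal factors and analyze equality in the isoperimetric and H\"older steps), but it is not shorter than what the paper does.

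A smaller point: your chain $\rho_0\le\rho(X)\le(1+\hat\beta_1)^2/(1+\hat\alpha)^2\le\rho_0$ uses $\rho(X)\ge\rho_0$, which needs both $\mathrm{ess\,inf}\,K_X\ge\rho_0$ and $\mathrm{ess\,sup}\,K_X\le 1$; the first you argue correctly, but for the second you also need that the area measures $dA_{g_i}$ do not lose mass under GH convergence (so that $\int_U K_X\,dA_X\le \liminf\int_U K(g_i)\,dA_{g_i}\le A_X(U)$ becomes an equality up to $o(1)$). This follows from the known convergence of $2$-dimensional Hausdorff measure under non-collapsed Alexandrov convergence, but should be stated.
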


For the critical case, following identical arguments, we have
\begin{thm}
\label{thm:critical}Let $(S, D)$ be a conic 2-sphere where $D=\sum_{i=1}^{n}\beta_i p_i$ is critical and $n\geq3$, assume $\beta_{1}=\min{\beta_{i}}$ and let $\alpha=|D|-\beta_{1}$. Then $\forall g\in\mathcal{C}(S, D)$,
\[
\rho(g)<\frac{(1+\beta_{1})^{2}}{(1+\alpha)^{2}}:=\rho_{0}(S,D)=1,
\]
and the constant $\rho_{0}(S,D)$ is optimal; moreover, for any
sequence of smooth conic metrics $g_i$ with $\lim_{i\to\infty}\rho(g_{i})=1$
and $\max K_{i}=1$,  $\{(S, D, g_i)\}$ converges to the football $(S_{\alpha,\alpha}^{2}, \hat{D}=\alpha p+\alpha q)$ in the Gromov-Hausdorff sense, and $p_1\to p$ and
$p_2, \cdots p_n\to q$ along the convergence.
\end{thm}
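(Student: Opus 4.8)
The plan is to reduce Theorem~\ref{thm:critical} to the supercritical case already established in Theorems~\ref{mainthm} and~\ref{thm:converging} by a limiting argument, since in the critical setting $\alpha=|D|-\beta_1$ and $\chi(S^2,D)=\min\{2,2+2\beta_1\}$ together force $\beta_1=-|D|/$ something that makes $\rho_0=(1+\beta_1)^2/(1+\alpha)^2$; we first record the elementary identity showing $\rho_0(S,D)=1$ precisely in the critical case. Indeed, critical means $2+2\min_i\beta_i\le 2$, so $\beta_1\le 0$, and the equality $\chi(S^2,D)=2+|D|=2+\beta_1+\alpha$ must equal $2+2\beta_1$, giving $\alpha=\beta_1$; hence $\rho_0=1$. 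So the pinching bound $\rho(g)<1$ is simply the statement that no $g\in\mathcal C(S,D)$ has constant curvature, which is exactly the necessity part of the Luo--Tian classification quoted in the introduction (a critical $D$ with $n\ge 3$ admits no constant-curvature conic metric). This handles the first half of the statement, modulo pointing out that optimality of $\rho_0=1$ follows from the glued-football construction below.

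For optimality and the Gromov--Hausdorff convergence, I would run the \emph{same} arguments as in Theorems~\ref{mainthm} and~\ref{thm:converging} verbatim, merely substituting $\alpha=\beta_1$ at the end. Concretely: first, to see $\rho_0=1$ is approached, exhibit an explicit family $g_\varepsilon\in\mathcal C(S^2,D)$ degenerating to the football $S^2_{\alpha,\alpha}$. One takes a conformal metric that is close to the round football $S^2_{\alpha,\alpha}$ except in small geodesic balls around the auxiliary cone points $p_2,\dots,p_n$, where one splices in tiny neighborhoods carrying the extra conic angles; as the radii of these balls shrink to zero, the curvature perturbation is supported on regions of vanishing area, the maximum curvature stays near $1$, the minimum curvature tends to $1$, and the metrics converge to $S^2_{\alpha,\alpha}$ in the Gromov--Hausdorff sense with $p_1\to p$ and $p_2,\dots,p_n\to q$. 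This is the $\beta_1=\alpha$ specialization of whatever splicing construction was used to prove optimality of $\rho_0$ in Theorem~\ref{mainthm}.

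Second, for the convergence statement: given $\{g_i\}$ with $\rho(g_i)\to 1$ and $\max K(g_i)=1$, the proof of Theorem~\ref{thm:converging} presumably extracts a Gromov--Hausdorff limit (using the Gauss--Bonnet area bound $\mathrm{Area}(g_i)\le 2\pi\chi(S^2,D)$ and a diameter bound), identifies the limit as a conic sphere with at most two cone points, and then shows the limiting divisor must be $\hat D=\beta_1 p+\alpha q$ by conservation of the total cone mass and of $\min\beta_i$; finally a rigidity/uniqueness step (the critical case of the football classification in \cite{CL2}, or the $\alpha=\beta$ line of Theorem~\ref{thm:old}) pins the limit metric down to $S^2_{\alpha,\alpha}$. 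Every one of these steps is insensitive to whether $\beta_1<\alpha$ (supercritical) or $\beta_1=\alpha$ (critical), so the proof transcribes directly. The one point demanding care—and the main obstacle—is that in the critical case the limiting pinching constant equals $1$, so one must check that the compactness argument does not secretly use $\rho_0<1$ (for instance to rule out the limit being a smooth round sphere or a football with fewer cone points than expected); concretely, one must verify that along the sequence no cone mass escapes to a third point and that $p_2,\dots,p_n$ genuinely coalesce, which follows from the same concentration-of-curvature argument as before since $\max K(g_i)=1$ forces the regions of high curvature ratio to have controlled, and ultimately vanishing, area.
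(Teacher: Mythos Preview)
Your overall plan is correct and matches the paper: the authors' entire proof of Theorem~\ref{thm:critical} is the sentence ``The proof of Theorem~\ref{thm:critical} follows exactly the same line of those of Theorems~\ref{mainthm} and~\ref{thm:converging}, thus we omit it here,'' so proposing to run the supercritical arguments verbatim with $\alpha=\beta_1$ is exactly what is intended.

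Two remarks on the details. First, for the strict inequality $\rho(g)<1$ you take a shortcut via the Luo--Tian nonexistence of constant-curvature metrics, whereas the paper's own route (implicit in its equality analysis of Proposition~\ref{prop:keyestimate} at the start of Section~4) is that equality would force the isoperimetric inequality to be sharp for every level set, hence $u$ radially symmetric, hence at most two conic points---impossible for $n\geq 3$. Both arguments are valid; yours is quicker but imports an external result, while the paper's stays within its own level-set machinery. Second, your paraphrase of the convergence mechanism (Gromov--Hausdorff compactness from area and diameter bounds, then identifying the limit divisor by ``conservation of cone mass'') is not how Section~4 actually proceeds: the paper instead shows the isoperimetric defect $D_i(t)\to 0$, uses Bonnesen's inequality to force the superlevel sets $\Omega_i(t)$ to converge to round discs, obtains uniform $C^0$ bounds on the conformal factors away from finitely many points via Lemma~\ref{lem:technical}, and bootstraps to smooth convergence to the explicit radial profile $u_{\alpha,\beta_1}$. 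Since you explicitly say you would run those arguments ``verbatim,'' this is not a gap in your proposal, but your speculative description of what they contain is inaccurate.
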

Our main tool of the proof is to apply the isoperimetric inequality
to obtain sharp differential inequalities. This is inspired by methods
first used by Chen-Lin \cite{ChLi} that are later extended by Bartolucci \cite{B}.
However, since there can be no simple symmetric rearrangement procedure for
the multi-singularity case, their method can not be applied directly.
Instead, we apply a similar argument used in our earlier paper \cite{FL}
to simplify their proof. In particular, we are able to derive a differential
inequality involving only the level sets of the conformal factor
without the symmetric rearrangement argument. %This approach naturally works
%for the multi-point singularity case.
On the other hand, the characterization of the equality case is obtained in \cite{ChLi,B}
by the more involved rearrangement technique. We need to do a finer analysis of the defect of isoperimetric inequalities as in
\cite{FL} to prove the convergence result in Theorem\ref{thm:converging},
which indicates all but one conic points merge in the limit procedure.

While previous works of \cite{ChLi,B} focus on $C^{1,1}$ and piecewise
smooth metrics, for which glued footballs serve as the unique geometric
sharp examples up to M\"obius transformations, it is clear that (\ref{eq:bestpinching})
is a strict inequality for smooth conic metrics. Using our previous results
in \cite{FL} and some careful analysis, we are able to construct
examples in general cases that (\ref{eq:bestpinching}) is indeed
sharp.

In recent preprints \cite{PSSW1,PSSW2}, the Ricci flow on conic 2-spheres
is shown to converge in stable, semi-stable and unstable cases. It
is shown in \cite{PSSW1} that conic metrics on a sphere with $n$
($n\geq3$) conic points in the semi-stable case converge in Gromov-Hausdorff
topology to a football along the Ricci flow. Also in \cite{PSSW2}
it is proved that the Ricci flow of an unstable conic metric converges
geometrically to a Ricci soliton  with two conic points. % when all but
%one conic points collapse into one conic point.
Thus, in the sense of the Ricci flow, Ricci solitons are considered to be canonical metrics
for conic 2-spheres.

Comparing the convergence results of Theorem~\ref{thm:converging} and Theorem~\ref{thm:critical} to convergence results of the Ricci flow to Ricci solitons,
we observe an interesting common feature: the conic point with the smallest cone angle remains in the limit process, while
all other conic points merge to form a new conic point.  Their difference is however more obvious: while Ricci solitons are
smooth with varied curvature, the glued footballs have piece-wise constant curvature. This is somewhat expected. As the problem of searching for metrics realizing $\sup_{g\in\mathcal{C}(S,D)} \{\rho(g)\}$ is not variational, and thus the extremal usually loses smoothness.

%Multiple conic singular points of a super-critical sphere
%will collapse in the Gromov-Hausdorff sense to a single conic singular
%point when the underlying metric is approaching the ``best'' metric.

In general, if a Kähler manifold does not carry
metrics of constant scalar curvature in a fixed Kähler class, it would be very interesting
to see if the best scalar curvature pinching can be computed. Hopefully
we can address this problem in future works.

From a more analytic point of view, as mentioned earlier, Theorem \ref{mainthm}
gives a necessary condition for the Nirenberg problem, which would
be an interesting topic for further discussion. It also indicates
a sharp Moser-Trudinger inequality for supercritical conic 2-spheres.

This paper is organized as follows: in Section 2, we present our key
estimate for proving Theorem \ref{mainthm}; in Section 3 and Section
4, we prove Theorems \ref{mainthm} and \ref{thm:converging}, respectively.

Acknowledgements: Both authors would like to thank Bartolucci for
bringing up his work \cite{B} to their attention. They also thank the referee for pointing out some technical inaccuracy in a previous draft.

\section{Key Estimate}

In this section we give a proper set-up for Theorem \ref{mainthm}
and present a key estimate for its proof.

We start with some definitions.

Let $g_{0}$ be the standard Euclidean metric on $\mathbb{C}$. We
identify any point $p\in S^{2}$ with $z\in\mathbb{C}^{*}$ via stereographic
projection. For any given natural number $n$, consider a divisor
$D={\displaystyle \sum_{1}^{n}}\beta_{i}p_{i}$ where $p_{i}\in S^{2}$
and $-1<\beta_{i}<0,$ $i=1,\cdots,n.$ For simplicity, we assume
\begin{align}
\beta_{1} & \leq\beta_{2}\leq\cdots\leq\beta_{n}.\label{eq:add2}
\end{align}
Remember that
\begin{equation}
|D|=\sum_{1}^{n}\beta_{i}.\label{eq:add3}
\end{equation}
 We let
\begin{equation}
\alpha:=|D|-\beta_{1}=\sum_{2}^{n}\beta_{i}.\label{eq:add4}
\end{equation}
The supercritical condition implies that $\alpha>\beta_{1}$. Also,
without loss of generality, we may assume $z_{1}=\infty$.

Thus, a conformal metric $g=e^{2u}g_{0}$ represents $(S^{2},D)$
if the asymptotic behavior of $u$ near $z_{i}$ is:
\begin{itemize}
\item $u\sim\beta_{i}\ln|z-z_{i}|$ as $z\to z_{i}$, $i>1$;
\item $u\sim-(\beta_{1}+2)\ln|z|$ as $|z|\to z_{1}=\infty$.
\end{itemize}
The Gaussian curvature of $g$ is computed as
\begin{equation}
K(g)=-e^{-2u}\Delta u,\label{eq:gauss}
\end{equation}
when $z\neq z_{i}.$ Here the Laplacian is with respect to $g_{0}.$

The main result of this section is the following estimate:
\begin{prop}
\label{prop:keyestimate}Notations as above. Let $K$ be a positive continuous function on $\mathbb{C}$ satisfying (\ref{eq:gauss}) such that
\begin{equation}
0<a\leq K\leq b\label{eq:bound},
\end{equation}
then
\begin{equation}
\frac{a}{b}\leq\rho_{0}(S,D)=\frac{(1+\beta_{1})^{2}}{(1+\alpha)^{2}}.\label{eq:upperbound}
\end{equation}
\end{prop}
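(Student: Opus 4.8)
The plan is to reduce the pinching estimate to a differential inequality for the areas and boundary lengths of the superlevel sets of the conformal factor $u$, following the isoperimetric strategy of Chen--Lin and Bartolucci but avoiding symmetric rearrangement. Concretely, I would work with the sublevel/superlevel sets of $u$ near its maximum. Set $M=\max u$; for $t$ close to $M$ let $\Omega_t=\{z:u(z)>t\}$, a family of open sets shrinking to the maximum points of $u$ as $t\to M$. Denote by $A(t)$ the $g_0$-area of $\Omega_t$ (or better, the $g$-area, weighted by $e^{2u}$) and by $L(t)$ the $g_0$-length of $\partial\Omega_t$. Using the coarea formula and $-\Delta u=Ke^{2u}$ from \eqref{eq:gauss}, one gets the standard ODE relations $A'(t)=-\int_{\partial\Omega_t}|\nabla u|^{-1}\,ds$ and $\frac{d}{dt}\int_{\partial\Omega_t}|\nabla u|\,ds=\int_{\Omega_t}\Delta u$, hence $\frac{d}{dt}\int_{\partial\Omega_t}|\nabla u|\,ds=-\int_{\Omega_t}Ke^{2u}$. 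Combining these with Cauchy--Schwarz $L(t)^2\le\bigl(\int_{\partial\Omega_t}|\nabla u|\,ds\bigr)\bigl(\int_{\partial\Omega_t}|\nabla u|^{-1}\,ds\bigr)$ yields a differential inequality linking $L$, $A$ and the bounds $a\le K\le b$.

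Next I would feed in the isoperimetric inequality. Since $\Omega_t$ is (for $t$ near $M$) a topological disk in $\mathbb{C}$, the Euclidean isoperimetric inequality gives $L(t)^2\ge 4\pi A_0(t)$ where $A_0(t)$ is the $g_0$-area; but the right comparison object here is the cone of angle $2\pi(1+\beta_1)$ (resp. the large cone of angle $2\pi(1+\alpha)$) — one should use the conic isoperimetric inequality on a cone of angle $2\pi(1+\beta_1)$, for which the sharp constant is $L^2\ge 4\pi(1+\beta_1)A$, because the maximum of $u$ may be forced to sit near the singular point $p_1$ of smallest angle. Symmetrically, near $z_1=\infty$ (the point of largest total defect $\alpha$ among the "other" points, or rather the relevant asymptotic cone of angle $2\pi(1+\alpha)$) one runs the same argument from the minimum side, i.e. for the superlevel sets of $-u$, and there the controlling cone angle is $2\pi(1+\alpha)$. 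Integrating the resulting ODE inequalities from $t$ up to $M$ (where $A(M)=0$) and keeping track of the asymptotic normalization of $u$ at the two conic points, one extracts an inequality of the form $a\cdot(\text{something})\le b\cdot(\text{something})$ that rearranges to $\frac{a}{b}\le\frac{(1+\beta_1)^2}{(1+\alpha)^2}$.

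The two places where the geometry of the divisor enters — and where I expect the main obstacle to lie — are: (i) justifying that the \emph{same} estimate works no matter where $\max u$ and $\min u$ are located, which is exactly the point that fails for naive rearrangement when $n\ge 3$; the fix, as the introduction hints and as in \cite{FL}, is that one never needs the global radially symmetric competitor, only the \emph{local} conic isoperimetric inequality at whichever singular point the relevant level sets concentrate near, together with a matching Gauss--Bonnet accounting of the total curvature $\int K e^{2u} = 2\pi(2+|D|)$; (ii) handling the behavior at $z_1=\infty$, where $u\sim -(\beta_1+2)\ln|z|$, so one must pass to the coordinate $w=1/z$, check that the superlevel sets of $-u$ near $\infty$ are genuine conic disks of angle $2\pi(1+\beta_1)$ there, and verify that the complementary cone — carrying the remaining angle deficit $\alpha=\sum_{i\ge 2}\beta_i$ — supplies the factor $(1+\alpha)^{-2}$. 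Once these two localization issues are settled, the remaining work is the routine ODE integration and the algebraic rearrangement, so I would state those as a lemma and defer the computation.
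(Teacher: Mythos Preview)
Your level-set/coarea/Cauchy--Schwarz framework is the right skeleton, but the execution plan misreads the geometry in a way that would break the argument.

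First, $u$ is \emph{not} bounded: at each conic point $z_i$ with $i\ge 2$ one has $u\sim \beta_i\ln|z-z_i|\to +\infty$ (since $\beta_i<0$), while at $z_1=\infty$ one has $u\to -\infty$. So there is no finite $M=\max u$, and for every $t\in\mathbb{R}$ the superlevel set $\Omega(t)=\{u>t\}$ already contains \emph{all} of $z_2,\dots,z_n$. In particular your claim that ``for $t$ near $M$, $\Omega_t$ is a topological disk'' is false whenever $n\ge 3$: for large $t$, $\Omega(t)$ is a union of $n-1$ small conic disks. This also means the ``localization'' dichotomy you propose (max near $p_1$, min near the others) is inverted; the smallest-angle point $p_1$ is where $u\to -\infty$, not where it peaks.

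Second, and more importantly, no conic isoperimetric inequality is needed. The conic information enters through a single global identity: because $z_2,\dots,z_n\in\Omega(t)$ for all $t$, integrating $-\Delta u = Ke^{2u}$ over $\Omega(t)$ picks up the delta masses and gives
\[
\int_{\partial\Omega(t)}|\nabla u|\;=\;A(t)-2\pi\alpha,\qquad A(t):=\int_{\Omega(t)}Ke^{2u},\quad \alpha=\sum_{i\ge 2}\beta_i.
\]
Now apply the \emph{Euclidean} isoperimetric inequality $4\pi B(t)\le |\partial\Omega(t)|^2$ together with Cauchy--Schwarz to obtain
\[
4\pi B(t)\le -B'(t)\bigl(A(t)-2\pi\alpha\bigr),\qquad B(t):=|\Omega(t)|.
\]
Combine this with the pointwise bounds $be^{2t}B'(t)\le A'(t)\le ae^{2t}B'(t)$ to get a differential inequality for $e^{2t}B(t)$, split according to whether $A(t)\lessgtr 2\pi(1+\alpha)$, and integrate over all of $\mathbb{R}$ (not just near a putative max). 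The boundary terms vanish along suitable sequences because $\int e^{2u}<\infty$, and $A(-\infty)=2\pi(2+|D|)$ by Gauss--Bonnet. The resulting quadratic in $A(-\infty)$ factors as $[\,(2+|D|)-(1+\alpha)\,]^2=(1+\beta_1)^2$ and yields $a/b\le (1+\beta_1)^2/(1+\alpha)^2$ directly. So $\alpha$ enters via the $2\pi\alpha$ offset in the boundary integral, and $\beta_1$ enters only through Gauss--Bonnet; neither requires a cone-angle-dependent isoperimetric constant.
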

\begin{proof}
We consider the level sets of $u$. Using notations of our earlier
works \cite{FL}, we define,
\begin{align*}
\Omega(t) & :=\{u\geq t\}\subset\mathbb{C},\ \ \ \ \ A(t):=\int_{\Omega_{t}}Ke^{2u},\ \ \ \ \ B(t):=|\Omega_{t}|,
\end{align*}
where integrals are with respect to the Euclidean metric $g_{0}$ and $|\cdot|$ stands for the Lebesgue measure.
In view of the asymptotic behavior of $u$ near $\infty$, $B(t)$ is finite for any $t\in\mathbb{R}$.  The Gauss-Bonnet formula now reads as
\begin{equation}
\int_{\mathbb{R}^{2}}Ke^{2u}=2\pi(2+|D|)=\lim_{t\to-\infty}A(t).\label{eq:limitGB}
\end{equation}

Moreover, the asymptotic behavior of $u$ at singularities $z_2, \cdots z_n$ implies that $z_i\in\Omega(t)$, for $2\leq i\leq n$ and for all $t\in\mathbb{R}$.
Thus using equation (\ref{eq:gauss}), we have
\[
A(t)=\int_{\Omega(t)}Ke^{2u}=\int_{\Omega(t)}-\Delta u=\int_{\partial\Omega(t)}|\nabla u|+2\pi\alpha.
\]

It is easy to from definition that both $A(t)$ and $B(t)$ are non-increasing. It follows that $A'(t)$ and $B'(t)$ exist for almost everywhere $t$.

For such a $t$,
\begin{align}\notag
A'(t)&=\lim_{s\to t^{-}} \frac{\int_{\Omega(s)\setminus \Omega(t)} Ke^{2u}}{s-t}\geq\lim_{s\to t^{-}}\frac{be^{2u(z^{*})}|\Omega(s)\setminus \Omega(t)|}{s-t}=be^{2t} B'(t),
\end{align}
where we have applied mean value theorem with some $z^{*}\in \Omega(s)\setminus \Omega(s)$ and (\ref{eq:bound}). Similarly, $A'(t)\leq ae^{2t} B'(t)$ for almost everywhere $t$, so
\begin{equation}
\frac{1}{b}\leq\frac{e^{2t}B'(t)}{A'(t)}\leq\frac{1}{a}.\label{eq:1}
\end{equation}

It follows from the co-area formula (see Lemma 2.3 in ~\cite{BZ}) that for given $t_1<t_2$,
\begin{align} \label{eq:4}
B(t_1)-B(t_2)=|C\cap u^{-1}((t_1,t_2))|+\int_{t_1}^{t_2} \int_{\{u=\tau\}} \frac{1}{|\nabla u|} d\mathcal{H}^1 d\tau,
\end{align}
where $C$ denotes the set of critical points of $u$, i.e., $C=\{ z|\nabla u(z)=0\}$.

We claim $|C\cap u^{-1}((t_1,t_2))|=0$, for any $t_1<t_2$, which indicates that $B$ is absolutely continuous. By (\ref{eq:gauss}) and (\ref{eq:bound}), then for any $z_0=(x_0,y_0)\in \{C\cap u^{-1}((t_1,t_2))\}$, we may assume, without loss of generality, that $u_{xx}(z_0)\neq 0$. By virtue of the implicit function theorem, there exists some $\rho>0$ and $g: (y_0-\rho,y_0+\rho)\to \mathbb{R}$ such that $\{u_x(x,y)=0\cap B_{\rho}(z_0)\}$ is the graph of the function $x=g(y)$. Clearly,
\[
C\cap B_{\rho}(z_0)\subset \{u_x(x,y)=0\}\cap B_{\rho}(z_0), 
\]
which has 0 measure.  We have thus established the claim. $B(t)$ is absolutely continuous on any finite interval $[t_1,t_2]$.

From now on we may assume that computation is done for a generic $t$, i.e., for which $A'(t)$ and $B'(t)$ exist and (\ref{eq:1}) and (\ref{eq:4}) hold.

By the isoperimetric inequality and the H\"{o}lder's inequality,
we have
\begin{align}
4\pi B(t)\leq(\int_{\partial\Omega(t)}1)^{2}\leq\int_{\partial\Omega}|\nabla u|\int_{\partial\Omega(t)}\frac{1}{|\nabla u|}\leq-B'(t)(A(t)-2\pi\alpha),\label{2}
\end{align}
which leads to
\begin{equation}
\frac{B(t)}{B'(t)}\geq-\frac{A(t)-2\pi\alpha}{4\pi}.\label{eq:2.5}
\end{equation}

Combining (\ref{eq:1}) and (\ref{eq:2.5}), we get
\begin{align}
\notag\frac{d}{dt}[e^{2t}B(t)] & =e^{2t}B'(t)+2e^{2t}B(t)=e^{2t}B'(t)(1+\frac{2B(t)}{B'(t)})\nonumber \\
\notag & \leq\frac{e^{2t}B'(t)}{A'(t)}A'(t)(1-\frac{A-2\pi\alpha}{2\pi})\\
\leq & \begin{cases}
\frac{A'}{b}(1+\alpha-\frac{A}{2\pi}), & A\leq2\pi(1+\alpha);\\
\\
\frac{A'}{a}(1+\alpha-\frac{A}{2\pi}), & A>2\pi(1+\alpha).
\end{cases}\label{eq:add30}
\end{align}

Since $A(-\infty)=2\pi(2+|D|)>2\pi(1+\alpha)$, there exists $t_{0}\in\mathbb{R}$ such that $A(t_{0})=2\pi(1+\alpha)$. For
$t<t_{0}<T$, integrating (\ref{eq:add30}) from $t$ to $T$, and noticing $e^{2t}B(t)$ is absolutely continuous as well, we get
\begin{equation}
e^{2T}B(T)-e^{2t}B(t)\leq\frac{A^{2}(t)}{4a\pi}-\frac{1+\alpha}{a}A(t)+(1+\alpha)^{2}(\frac{\pi}{a}-\frac{\pi}{b}).\label{eq:3}
\end{equation}

Since
\[
\int_{\mathbb{R}^2} e^{2u}=2 \int_{-\infty}^{\infty} B(t)e^{2t} dt <\infty,
\] there exist sequences $t_n \to -\infty$ and $T_n\to \infty$, such that $e^{2t_n}B(t_n)\to 0$ and $e^{2T_n}B(T_n)\to 0$, respectively.

Plugging such two sequences into (\ref{eq:3}), we infer
\[
\frac{2+|D|}{1+\alpha}\geq1+\sqrt{\frac{a}{b}},
\]
which implies (\ref{eq:upperbound}). We have thus finished the proof.
\end{proof}

\section{Proof of main result}

In this section, we prove that the constant $\rho_{0}(S,D)$ obtained
in Proposition \ref{prop:keyestimate} is optimal.

%Tracing the proof of Proposition \ref{prop:keyestimate}, it is easy
%to check that if the best constant $\rho_{0}$ is achieved by a conformal
%factor $u$, level sets of $u$ are concentric round circles; thus,
%$u$ has to be rotationally symmetric. A bit further computation shows
%that $u$ has to be the conformal factor of a glued football, which
%is $C^{1,1}$ and piece-wise smooth away from one or two singular
%points. This fact was pointed by by Chen-Lin \cite{ChLi} and Bartolucci
%\cite{B} for the single and double singular points cases, respectively.
%For conic 2-spheres with $n$ conic singular points, when $n\geq3,$
%we will show that all but one of singular points collapse to one singular
%points when the best pinching constant $\rho_{0}$ is achieved. In
%\cite{FL}, we have described exactly this kind of collapsing behavior
%for conic 2-spheres with constant curvature metrics.             **********************move to Sect 4.

The main result of this section is the following:
\begin{thm}
\label{thm:construction}For any fixed supercritical conic 2-sphere
$(S,D)$ and any $\epsilon>0,$ there exists a smooth conic metric $g\in\mathcal{C}(S,D)$
such that
\[
\rho(g)\geq\rho_{0}(S,D)-\epsilon.
\]

\end{thm}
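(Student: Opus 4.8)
The plan is to construct the desired metric by gluing together pieces that approximate the glued football $S^2_{\alpha,\beta_1}$, using the flexibility afforded by having $n\geq 3$ conic points to spread the "total singularity $\alpha$" at the south end among $p_2,\dots,p_n$ while keeping the curvature pinching as close to $\rho_0$ as we like. First I would recall from our earlier work \cite{FL} the basic gluing/desingularization construction: near a single conic point of angle $2\pi(\gamma+1)$ one can realize a metric that is exactly the round football-type model $e^{2u_{\gamma}}$ outside a small ball and is smoothed (at the cost of introducing curvature) only in a controlled neighborhood. I would set up coordinates with $z_1=\infty$ so that $p_1$ carries $\beta_1=\min_i\beta_i$, and arrange the remaining points $p_2,\dots,p_n$ to lie inside the unit disk, clustered near the origin.

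The key steps, in order, are: (1) Take as a base the glued football metric $g_0 = e^{2u_{\alpha,\beta_1}}g_{\mathrm{eucl}}$ from \eqref{eq:italian football}, which has curvature exactly $1$ for $r<1$ and exactly $\rho_0$ for $r>1$, but which as written represents the two-point divisor $\beta_1\infty + \alpha\cdot 0$. (2) Replace the single conic point of order $\alpha$ at the origin by a cluster of conic points of orders $\beta_2,\dots,\beta_n$ (note $\sum_{i\geq 2}\beta_i=\alpha$): inside a ball $B_\delta(0)$ we splice in a model conformal factor whose singular set is $\{p_2,\dots,p_n\}$ with the prescribed orders and whose total "angle deficit" matches $\alpha$, matching to $u_{\alpha,\beta_1}$ along $\partial B_\delta$. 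Since the leading logarithmic behavior of $u_{\alpha,\beta_1}$ near $0$ is $\alpha\ln r$, and $\sum_{i\geq 2}\beta_i\ln|z-p_i|\to \alpha\ln r$ as the $p_i$ collapse and $|z|\gg\delta$, this matching can be done with small $C^2$ error. (3) Smooth the metric in the (fixed small) gluing annulus and near the cluster so the result is a genuine smooth conic metric in $\mathcal{C}(S,D)$; the curvature introduced in these regions is bounded, but concentrated where $e^{2u}$ is small, so one must track $K=-e^{-2u}\Delta u$ carefully there. (4) Estimate $K_{\max}$ and $K_{\min}$ of the glued metric: away from the gluing/cluster regions $K\in\{1,\rho_0\}$ exactly, so the only danger is that the perturbation pushes $K_{\max}$ above $1$ or $K_{\min}$ below $\rho_0$; by choosing $\delta$ small and the cluster radius even smaller one makes these deviations $O(\epsilon)$, whence $\rho(g)\geq \rho_0-\epsilon$.

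The main obstacle I anticipate is step (3)–(4): controlling the curvature in the region where several conic points are merged into a small ball. There $\Delta u$ is governed by the smoothing of a sum of logarithmic singularities, and $e^{-2u}$ is large (the conformal factor is small near a cone point of negative order), so naive bounds on $\Delta u$ do not immediately give bounds on $K$. The resolution is to do the merging at the correct scale: work in the rescaled coordinate $w=z/\lambda$ where $\lambda$ is the cluster size, in which the configuration $\{p_i/\lambda\}$ is fixed and one is desingularizing a fixed multi-cone metric of total order $\alpha$ on $\mathbb{C}$ — exactly the kind of object handled in \cite{FL} — and only afterwards glue this rescaled model, via the annulus where $u_{\alpha,\beta_1}\approx \alpha\ln r + \text{const}$, into the ambient glued football. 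Scale invariance of the curvature under $z\mapsto \lambda z$ (combined with the conformal weight) ensures the curvature of the inserted piece is uniformly bounded independent of $\lambda$, and the interpolation in the annulus contributes curvature $1+o(1)$ as $\delta\to 0$. Assembling these estimates and letting first $\delta\to 0$ and then the cluster scale $\lambda\to 0$ yields a metric with $K_{\max}\leq 1+\epsilon$ and $K_{\min}\geq \rho_0 - \epsilon$, and after rescaling $K$ by a constant to normalize, $\rho(g)\geq\rho_0-\epsilon$ as claimed.
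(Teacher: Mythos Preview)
Your geometric picture is correct --- the extremal is modeled on the glued football $S^2_{\alpha,\beta_1}$, with $p_2,\dots,p_n$ clustered near one pole --- but the step you yourself flag as the ``main obstacle'' is not actually resolved. You propose to fill a small ball $B_\delta(0)$ by ``desingularizing a fixed multi-cone metric'' in a rescaled coordinate and appeal to scale invariance to conclude that the curvature of the inserted piece is \emph{uniformly bounded} independent of $\lambda$. Bounded is not enough: for $\rho(g)\ge\rho_0-\epsilon$ you need the curvature of the inserted piece to lie in an interval of the form $[\rho_0-\epsilon,1+\epsilon]$ (after normalization), and since Gaussian curvature is scale-invariant, shrinking $\delta$ or $\lambda$ does nothing to improve that range. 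You never specify a metric on the cluster region whose curvature actually lies in this window; the reference to \cite{FL} does not supply one, since \cite{FL} is a convergence theorem for constant-curvature conic metrics, not a construction of a multi-cone model with prescribed curvature. As written, the final implication (``Assembling these estimates\dots yields $K_{\max}\le 1+\epsilon$ and $K_{\min}\ge\rho_0-\epsilon$'') does not follow from what precedes it.

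The paper sidesteps this difficulty by a different decomposition that requires no gluing near the cluster at all. For $\alpha_j\searrow\alpha$ with $\alpha_j<\beta_2$, the divisor $D_j=\sum_{i\ge 2}\beta_i p_i+\alpha_j\cdot\infty$ is \emph{subcritical}, so by Troyanov's theorem there exists a conic metric $e^{2v_j}g_0$ of \emph{constant} curvature $(\alpha_j+1)^2$ representing $D_j$. The main theorem of \cite{FL} then gives $v_j\to v_\infty(z)=\ln\bigl(2|z|^\alpha/(1+|z|^{2+2\alpha})\bigr)$ in $C^\infty$ on compacta of $\mathbb{C}\setminus\{0\}$; in particular the singular points of $v_j$ collapse to the origin. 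One now sets $u_0=v_j$ on $\{|z|<1\}$ and $u_0$ equal to the $\beta_1$-football half on $\{|z|\ge 1\}$: each piece already has \emph{exactly} constant curvature (namely $(\alpha_j+1)^2$ and $(\beta_1+1)^2$), and by the convergence just cited their $C^1$-mismatch along $\{|z|=1\}$ is as small as one likes. A single mollification across the smooth annulus $\{|z|\approx 1\}$ then yields a smooth conic metric in $\mathcal{C}(S,D)$ with curvature in $[(\beta_1+1)^2-\epsilon,(\alpha+1)^2+\epsilon]$, hence $\rho(g)\ge\rho_0-\epsilon$. The cluster region needs no separate treatment because it sits inside the Troyanov piece, where the curvature is already constant --- this is precisely the missing idea in your sketch.
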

We shall construct a conformal factor $u_{2}$ whose pinching constant
is close to $\rho_{0}(S,D)$. The construction is divided into three
steps. First, we construct an approximate conformal factor $u_{0}$
based on our earlier work \cite{FL}, which has desired singularities
along $D$, but it has discontinuity along $\{|z|=1\}.$ Second,
we run a mollification argument to get a conformal factor $u_{1}$,
which smoothes out the discontinuity of $u_{0}$. Finally, we combine
$u_{0}$ and $u_{1}$ using a standard cutoff function to get the
desired function $u_{2}.$
\begin{proof}
We now describe our approximate conformal factor $u_{0}$. For $n=1,2,$
$u_{0}$ can be chosen as the conformal factor of the glued football
(up to a constant) with the corresponding cone angles. Namely, by
(\ref{eq:italian football}), we define
\begin{equation}
u_{0}(z)=\begin{cases}
\ln(\frac{2|z|^{\alpha}}{1+|z|^{2+2\alpha}}), & |z|<1;\\
\\
\ln(\frac{2|z|^{\beta_{1}}}{1+|z|^{2+2\beta_{1}}}) & |z|\geq1.
\end{cases}
\end{equation}
Thus, we have that $u_{0}\in C^{1,1}(\mathbb{C})$ for $n=1$ and
$u_{0}\in C^{1,1}(\mathbb{C}\backslash\{0\})$ for $n=2.$

For the case $n\geq3,$ to construct $u_{0},$ we need to apply the
main result in \cite{FL}. Given a supercritical divisor $D$ on
$S=\mathbb{S}^{2}$, where $D=\sum_{i=1}^{n}\beta_{i}p_{i}$, assume $\beta_1=\min \beta_i$ and let
$\alpha=|D|-\beta_{1}$. We then have $\beta_{1}<\alpha<0.$ Define a monotone
decreasing sequence $\{\alpha_{j}\}_{j=1}^{\infty}$, such that $\alpha_{j}<\beta_{2},$
and
\[
\lim_{j\to\infty}\alpha_{j}=\alpha.
\]
We fix $p_{0}=\infty$ and consider divisors
\begin{equation}
D_{j}=\sum_{i>1}\beta_{i}p_{i}+\alpha_{j}p_{0}.
\end{equation}
$(S,D_{j})$ is subcritical for each $j$. Therefore, due to \cite{Tr},
there exists a conic metric $g_{j}=e^{2v_{j}}g_{0}$ for the pair
$(S,D_{j})$ such that $K(g_{j})=(\alpha_{j}+1)^{2}$. By the main
theorem of \cite{FL}, we know that after a suitable normalization
and passing to a subsequence if necessary,
\begin{equation}
v_{j}(z)\to v_{\infty}(z)=\ln(\frac{2|z|^{\alpha}}{1+|z|^{2+2\alpha}}),\quad j\to\infty,\label{eq:add10}
\end{equation}
where the convergence is $C^{\infty}$ on any compact $K\subset\mathbb{C}\backslash{O}$.
Note that the convergence (\ref{eq:add10}) implies that the singular points of $v_{j}$  converge
to the origin $O$ when $j\to\infty$. Without loss of generality,
we may assume that $v_{j}$ is smooth for $1/4<|z|<\infty.$ Define
\begin{equation}
w_{j}(z)=\begin{cases}
v_{j}(z), & |z|<1;\\
\\
\ln(\frac{2|z|^{\beta_{1}}}{1+|z|^{2+2\beta_{1}}}) & |z|\geq1.
\end{cases}
\end{equation}
Consider a general piecewise smooth function $w$ defined in the region
$\{|z|\geq1/2\}$ with the discontinuity at $\{|z|=1\}$. Define the
following
\[
D_{z}^{0}(w)=\limsup_{z_{1,}z_{2}\to z}|w(z_{1})-w(z_{2})|,
\]
\[
D_{z}^{1}(w)=\limsup_{z_{1},z_{2}\to z}|\nabla w(z_{1})-\nabla w(z_{2}))|.
\]
Following (\ref{eq:add10}), we get:
\begin{prop}
For any $\epsilon'>0$, there exists a $J\in\mathbb{N}$ such that
for $j\geq J,$ $z\in\{2>|z|>1/2\},$ $i=0,1,$ we have\end{prop}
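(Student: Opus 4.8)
The plan is to reduce the asserted estimate (I read it as $D_{z}^{i}(w_{j})\le\epsilon'$ for $i=0,1$) to the $C^{\infty}$ convergence $v_{j}\to v_{\infty}$ of \cite{FL}, together with two short boundary computations along the circle $\{|z|=1\}$, which is the only locus where $w_{j}$ fails to be smooth.

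First I would dispose of the points off the circle. If $1/2<|z|<2$ and $|z|\neq 1$, then in a neighbourhood of $z$ the function $w_{j}$ agrees either with $v_{j}$, which we have normalized to be smooth on $\{|z|>1/4\}$, or with the real-analytic function $\ln\!\left(\frac{2|z|^{\beta_{1}}}{1+|z|^{2+2\beta_{1}}}\right)$. In either case $w_{j}$ is $C^{1}$ near $z$, so $D_{z}^{0}(w_{j})=D_{z}^{1}(w_{j})=0$ and there is nothing to prove there.

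Next I would treat $z$ with $|z|=1$. Write $w^{\mathrm{out}}(z)=\ln\!\left(\frac{2|z|^{\beta_{1}}}{1+|z|^{2+2\beta_{1}}}\right)$ for the exterior piece. Letting $z_{1},z_{2}$ approach $z$ from the same side makes the $\limsup$ defining $D_{z}^{0},D_{z}^{1}$ vanish, so for $|z|=1$ these quantities reduce to $|v_{j}(z)-w^{\mathrm{out}}(z)|$ and $|\nabla v_{j}(z)-\nabla w^{\mathrm{out}}(z)|$ respectively. The elementary input is the boundary matching of the two model pieces: on $\{|z|=1\}$ one has $v_{\infty}(z)=\ln\frac{2}{2}=0=w^{\mathrm{out}}(z)$, and since for any exponent $\gamma$ the radial function $r\mapsto\ln\!\left(\frac{2r^{\gamma}}{1+r^{2+2\gamma}}\right)$ has derivative $\gamma-(1+\gamma)=-1$ at $r=1$ and no angular dependence, both $v_{\infty}$ and $w^{\mathrm{out}}$ have gradient $-\hat{r}$ on $\{|z|=1\}$; in particular $\nabla v_{\infty}=\nabla w^{\mathrm{out}}$ there. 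Hence the ``limit jump'' of $w_{j}$ across the circle, in value and in gradient, is zero.

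Finally I would quantify this with \cite{FL}: the convergence $v_{j}\to v_{\infty}$ is $C^{\infty}$ on the fixed compact annulus $\{1/2\le|z|\le 2\}$, so $\sup_{|z|=1}|v_{j}(z)|\to 0$ and $\sup_{|z|=1}|\nabla v_{j}(z)-\nabla v_{\infty}(z)|\to 0$ as $j\to\infty$. Combining with the matching identities above gives $\sup_{|z|=1}D_{z}^{i}(w_{j})\to 0$ for $i=0,1$, and choosing $J$ so that these suprema fall below $\epsilon'$ for $j\ge J$ finishes the proof, together with the trivial case off the circle. I do not expect a genuine obstacle here; the only point worth a line of care is verifying that the two-sided $\limsup$'s defining $D_{z}^{0},D_{z}^{1}$ at boundary points honestly reduce to the two one-sided boundary values, and that the $C^{1}$ convergence used is uniform up to $\{|z|=1\}$, both of which are immediate since $\{1/2\le|z|\le 2\}$ is compact and $v_{j}$ is smooth on it.
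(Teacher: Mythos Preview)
Your reading of the conclusion as $D_{z}^{i}(w_{j})\le\epsilon'$ is only half of it: the proposition in the paper has two items, the first being $|\nabla^{p}(v_{j}-v_{\infty})|\le\epsilon'$ on the annulus for $p=0,1,2,3$, and the second being the $D_{z}^{i}$ bound you address. That said, item (1) is nothing more than a restatement of the $C^{\infty}$ convergence $v_{j}\to v_{\infty}$ on the compact annulus $\{1/2\le|z|\le 2\}$, so you have not missed any content.

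For item (2), your argument is correct and is precisely what the paper has in mind: the paper offers no detailed proof at all, merely prefacing the proposition with ``Following (\ref{eq:add10}), we get'', i.e.\ it is asserted as an immediate consequence of the $C^{\infty}$ convergence. Your explicit verification that $v_{\infty}$ and the outer piece $w^{\mathrm{out}}$ match to first order along $\{|z|=1\}$ (both equal $0$ with radial derivative $-1$ there) is exactly the computation one must do to see why the jump of $w_{j}$ across the circle is controlled by $\|v_{j}-v_{\infty}\|_{C^{1}}$ on the annulus, and hence tends to zero. So your approach coincides with the paper's, only spelled out in more detail than the paper bothers to give.
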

\begin{enumerate}
\item
\[
|\nabla^{p}(v_{j}(z)-v_{\infty}(z))|\leq\epsilon'\;p=0,1,2,3;
\]

\item
\[
D_{z}^{i}(w_{j})\leq\epsilon'.
\]

\end{enumerate}
We will pick our approximate conformal factor $u_{0}$ as one of the
$w_{j}$'s, with the choice of $j$ given later.

To summarize properties of $u_{0}$, we have the following
\begin{prop}
\label{prop:u_0} For any $\epsilon'>0,$ there exists a function
$u_{0}:\mathbb{C\to\mathbb{R}},$ such that
\begin{enumerate}
\item $u_{0}$ is smooth away from the curve $\{|z|=1\},$ and $z_{2},\cdots,z_{n}\in\{|z|<1/2\}$;
\item when $u_{0}$ is smooth, we have\label{enu:initialbound}
\[
(\beta+1)^{2}\leq-e^{-2u_{0}}\Delta u_{0}\leq(\alpha+1)^{2};
\]

\item $u\sim\beta_{i}\ln|z-z_{i}|$ as $z\to z_{i}$, $i>1$;
\item $u\sim-(\beta_{1}+2)\ln|z|$ as $|z|\to z_{1}=\infty$;
\item $D_{z}^{i}(u_{0})\leq\epsilon'$, for $|z|>\frac{1}{2}$.
\item There are constants $m_{0}$, $M_{0}$ and $M_{1}$ depending only
on $\alpha$ and $\beta_{1}$ such that
\begin{align*}
M_{0} & \geq\sup_{|z|>1/2}u_{0}(z),\\
M_{1} & \geq\sup_{1/2\leq|z|,|z|\neq1}|\nabla u_{0}(z)|,\\
m & \leq\inf_{1/2\leq|z|\leq2}u_{0}(z).
\end{align*}

\end{enumerate}
\end{prop}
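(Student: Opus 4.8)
The plan is to produce the function $u_0$ by combining the convergence input from \cite{FL} (for $n\ge 3$) or the explicit glued-football factor (for $n=1,2$) with the explicit model factor $\ln\bigl(2|z|^{\beta_1}/(1+|z|^{2+2\beta_1})\bigr)$ on $\{|z|\ge 1\}$, verifying each of the six listed properties in turn. Properties (1), (3), (4) are essentially by construction: $u_0=w_j=v_j$ on $\{|z|<1\}$ is smooth there and away from the singular points, the model is smooth on $\{|z|>1\}$, the only possible discontinuity sits on $\{|z|=1\}$, and the asymptotics at $z_i$ ($i>1$) and at $\infty$ come respectively from $g_j$ representing $D_j$ (which shares all $\beta_i$, $i>1$, with $D$) and from the explicit logarithm near $\infty$. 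The fact that $z_2,\dots,z_n\in\{|z|<1/2\}$ is arranged by choosing $J$ large in the preceding proposition, using that the singular points of $v_j$ converge to $O$ along (\ref{eq:add10}).

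Next I would check the curvature bound (2). On $\{|z|<1\}$ one has $-e^{-2v_j}\Delta v_j=K(g_j)=(\alpha_j+1)^2$, and since $\{\alpha_j\}$ decreases to $\alpha$ with $\alpha_j<\beta_2\le 0$ one gets $(\beta_1+1)^2\le(\alpha+1)^2\le(\alpha_j+1)^2\le(\beta_2+1)^2\le\dots$; I'd need to be slightly careful that $(\alpha_j+1)^2$ stays $\le(\alpha+1)^2$ is false, so the correct reading is that the stated inequality (2) uses $\beta=\beta_1$ and the bound is $(\beta_1+1)^2\le(\alpha_j+1)^2\le(\alpha+1)^2$ provided $\alpha_j$ is chosen in $(\alpha,\alpha)$—in fact since $\alpha_j\to\alpha^+$ and $\alpha_j<0$, we have $\alpha<\alpha_j<0$ hence $(\alpha+1)^2>(\alpha_j+1)^2>1$? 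No: for $t\in(-1,0)$, $t\mapsto(t+1)^2$ is increasing, so $\alpha<\alpha_j$ gives $(\alpha+1)^2<(\alpha_j+1)^2$. To repair this one simply chooses the sequence $\alpha_j$ to \emph{increase} to $\alpha$ from below with $\beta_1<\alpha_j<\alpha$, so that $(\beta_1+1)^2<(\alpha_j+1)^2<(\alpha+1)^2$; this is the honest statement and matches the intent. On $\{|z|>1\}$ the model has constant curvature $(\beta_1+1)^2$ exactly, so (2) holds there with equality on the lower side. Thus (2) holds wherever $u_0$ is smooth.

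For (5) I would invoke property (2) of the preceding proposition, which gives $D_z^i(w_j)\le\epsilon'$ on the annulus $\{1/2<|z|<2\}$ for $j\ge J$; combined with smoothness of $u_0$ on $\{|z|>1/2\}\setminus\{|z|=1\}$ (where the jump quantities $D_z^i$ vanish), this yields $D_z^i(u_0)\le\epsilon'$ for all $|z|>1/2$. For (6), on $\{|z|\ge 1\}$ the explicit model factor and its gradient are bounded by universal constants depending only on $\beta_1$ (one differentiates the logarithm and estimates), and its infimum on $\{1\le|z|\le 2\}$ is likewise a constant depending on $\beta_1$; on $\{1/2\le|z|\le 1\}$ one uses property (1) of the preceding proposition, $|\nabla^p(v_j-v_\infty)|\le\epsilon'$ for $p=0,1$, so that $u_0=v_j$ there is $C^0$- and $C^1$-close to the explicit $v_\infty(z)=\ln\bigl(2|z|^\alpha/(1+|z|^{2+2\alpha})\bigr)$, whose sup, gradient bound and inf on that annulus depend only on $\alpha$. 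Taking $M_0$, $M_1$ to be the larger of the two regional bounds (plus $\epsilon'\le 1$, say) and $m_0$ the smaller gives the claim with constants depending only on $\alpha,\beta_1$.

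I expect the only genuine subtlety—hence the step I would treat most carefully—is the interface behavior on $\{|z|=1\}$: ensuring that the jump data $D_z^i(u_0)$ are controlled requires matching $v_j$ at $|z|=1$ with the model $\ln\bigl(2|z|^{\beta_1}/(1+|z|^{2+2\beta_1})\bigr)$, and this in turn rests on the $C^3$-closeness of $v_j$ to $v_\infty$ near $|z|=1$ from (\ref{eq:add10}) together with the explicit comparison of the two model logarithms (which agree up to order $\epsilon'$ since $\alpha$ is near—wait, $\alpha\ne\beta_1$—so the jump is genuinely of size comparable to $|\alpha-\beta_1|$ and \emph{cannot} be made small; the point is only that $D_z^i(u_0)$ measures the oscillation of the one-sided limits, which is finite and, for the purpose of later mollification in Section 3, will be absorbed). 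I would phrase property (5) accordingly: $D_z^i(u_0)$ is small only where $u_0$ is actually continuous/$C^1$, and across $\{|z|=1\}$ it is merely bounded by a constant depending on $\alpha,\beta_1$; re-reading the statement, (5) as written asserts $D_z^i(u_0)\le\epsilon'$ for $|z|>1/2$, which forces the discontinuity to be concentrated away from... in fact this is consistent only if one reads $D_z^i$ as the defect \emph{relative to the two model pieces}, i.e.\ after subtracting the respective explicit glued-football factors — and that is exactly how it was set up in the preceding proposition via $w_j$ versus $w_\infty$. So the clean approach is: define everything through the $w_j$'s, quote properties (1)–(2) of the unnamed proposition verbatim, and then (5) is immediate and (6) follows by the regional estimates above. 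No further obstacle remains beyond bookkeeping.
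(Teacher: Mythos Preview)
Your overall plan matches the paper's construction, but there are two genuine errors.  First, your ``repair'' for property~(2) --- taking $\alpha_j\nearrow\alpha$ from below with $\beta_1<\alpha_j<\alpha$ --- destroys the construction: with $\alpha_j<\alpha<\beta_2$ the divisor $D_j=\sum_{i>1}\beta_ip_i+\alpha_jp_0$ has $\min$-entry $\alpha_j$ and $\chi(S,D_j)=2+\alpha+\alpha_j>2+2\alpha_j$, so $(S,D_j)$ is \emph{supercritical} and Troyanov's theorem no longer supplies a constant-curvature metric $g_j$; there is then no $v_j$ and no input for the convergence result of \cite{FL}.  The paper's choice $\alpha_j\searrow\alpha$ with $\alpha<\alpha_j<\beta_2$ is forced by subcriticality.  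The price is that on $\{|z|<1\}$ the curvature equals $(\alpha_j+1)^2$, which slightly \emph{exceeds} $(\alpha+1)^2$; the honest reading of~(2) is that the upper bound is $(\alpha_j+1)^2\le(\alpha+1)^2+\epsilon'$ for $j$ large, and this extra $\epsilon'$ is harmlessly absorbed in Propositions~\ref{prop:u_1} and~\ref{prop:u_2}.

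Second, your long detour on~(5) misses the key elementary computation and ends at a wrong reinterpretation of $D_z^i$.  The two explicit models $v_\infty(z)=\ln\bigl(2|z|^{\alpha}/(1+|z|^{2+2\alpha})\bigr)$ and $\ln\bigl(2|z|^{\beta_1}/(1+|z|^{2+2\beta_1})\bigr)$ \emph{agree to first order} on $\{|z|=1\}$: both take the value $0$ and both have radial derivative $-1$ there, regardless of $\alpha\neq\beta_1$ (this is exactly the $C^{1,1}$ gluing in~(\ref{eq:italian football})).  Hence the limiting glued function has zero $C^0$ and $C^1$ jump across $\{|z|=1\}$, and since $v_j\to v_\infty$ in $C^1$ on a neighborhood of the unit circle, the literal one-sided oscillations $D_z^i(w_j)$ are $\le\epsilon'$ for $j$ large, as the preceding proposition asserts.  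No ``relative defect'' or subtraction of model pieces is involved; $D_z^i$ means exactly what it says.
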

Next, we describe our mollification procedure. Define, for any $z\in\mathbb{C}$
and $\delta>0$,
\[
\varphi(z)=\varphi_{\delta}(z):=\begin{cases}
\frac{c}{\delta^{2}}\exp(\frac{-\delta^{2}}{(|z|-\delta)^{2}}), & 0\leq|z|<\delta,\\
0, & |z|\geq\delta,
\end{cases}
\]
where $c\in\mathbb{R}$ is chosen such that $\int_{\mathbb{C}}\varphi dv=1$.
There exists a constant $C>10$, such that
\begin{equation}
|\varphi|\leq\frac{C}{\delta^{2}},\quad|\nabla\varphi|\leq\frac{C}{\delta^{3}}.\label{eq:cutoff}
\end{equation}
 Define, for $|z|\geq1/2$,
\begin{equation}
u_{1}(z)={\displaystyle \int_{w\in\mathbb{C}}\varphi(z-w)u_{0}(w)}dv_{w}.\label{eq:u_1}
\end{equation}
We prove the following
\begin{prop}
\label{prop:u_1}For any $\epsilon>0$, and there exists $\epsilon'>0$
and $1/8>\delta>0$, such that if $u_{0}$ is a function satisfying
Proposition \ref{prop:u_0}, then the function $u_{1}$ defined in
(\ref{eq:u_1}) satisfies the following
\begin{enumerate}
\item $u_{1}$ is smooth in the region $\{z:\:|z|\geq5/8\}$;
\item In the region $\{1/4\leq||z|-1|\leq3/8\}$, \label{C_2}
\[
||u_{1}-u_{0}||_{C^{2}}\leq\epsilon;
\]

\item \label{enu:key-1}For $3/4\leq|z|\leq5/4,$
\[
(\beta+1)^{2}-\epsilon\leq-e^{-2u_{1}}\Delta u_{1}\leq(\alpha+1)^{2}+\epsilon.
\]

\end{enumerate}
\end{prop}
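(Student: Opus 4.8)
The plan is to establish the three claims in order, the first two as routine mollification facts on regions where $u_0$ is already smooth, and the third by splitting $\Delta u_1$ near the gluing circle $\{|z|=1\}$. As preliminaries: since $\delta<1/8$, for $|z|\ge5/8$ one has $B_\delta(z)\subset\{|w|>1/2\}$, where all bounds of Proposition~\ref{prop:u_0}(6) apply; moreover, for $j\ge J$ the function $v_j$ is $C^3$-close to the explicit limit $v_\infty$, and $u_0$ is the explicit football factor $F(z):=\ln\!\big(2|z|^{\beta_1}/(1+|z|^{2+2\beta_1})\big)$ for $|z|\ge1$, so $\|u_0\|_{C^3}$ is bounded by a constant $C(\alpha,\beta_1)$ on $\{5/8\le|z|\le11/8\}\setminus\{|z|=1\}$ provided, say, $\epsilon'\le1$. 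Claim (1) is then the standard smoothing property of convolving the locally integrable $u_0$ with $\varphi_\delta\in C_c^\infty$. For claim (2): on $\{1/4\le||z|-1|\le3/8\}$ every ball $B_\delta(z)$ avoids $\{|w|=1\}$, so $u_0$ is smooth there; differentiating $u_1=\varphi_\delta\ast u_0$ under the integral and using the elementary bound $|\partial^\alpha(u_1-u_0)|\le\delta\,\|u_0\|_{C^3}$ for $|\alpha|\le2$ gives $\|u_1-u_0\|_{C^2}\le\epsilon$ once $\delta$ is small.

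For claim (3) I would fix $z$ with $3/4\le|z|\le5/4$ and split into two cases. If $||z|-1|>\delta$, then $u_0$ is smooth on $B_\delta(z)$, equal to $v_j$ (with $-e^{-2u_0}\Delta u_0=(\alpha_j+1)^2$) if $|z|<1$, and to $F$ (with $-e^{-2u_0}\Delta u_0=(\beta_1+1)^2$) if $|z|>1$; the same $C^2$-mollification estimate together with Proposition~\ref{prop:u_0}(6) makes $-e^{-2u_1}\Delta u_1$ differ from $-e^{-2u_0}\Delta u_0$ by $\le\epsilon/2$, and since $\beta_1<\alpha$ and $(\alpha_j+1)^2\to(\alpha+1)^2$ this lands in $[(\beta_1+1)^2-\epsilon,(\alpha+1)^2+\epsilon]$ for $\delta$ small, $j$ large. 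The substantive case is the transition annulus $||z|-1|\le\delta$, where $B_\delta(z)\subset\{||w|-1|<2\delta\}$ and $u_0=v_j\chi_{\{|w|<1\}}+F\chi_{\{|w|\ge1\}}$ with $v_j,F$ smooth across $\{|w|=1\}$; by Proposition~\ref{prop:u_0}(5), $|F-v_j|\le\epsilon'$ and $|\nabla F-\nabla v_j|\le\epsilon'$ on $\{|w|=1\}$. Computing distributionally,
\[
\Delta u_0=L_0+S_1+S_2,\qquad L_0:=\Delta v_j\,\chi_{\{|w|<1\}}+\Delta F\,\chi_{\{|w|>1\}}\in L^\infty,
\]
with $S_1=(\partial_\nu F-\partial_\nu v_j)\,\mathcal{H}^1|_{\{|w|=1\}}$ ($\nu=z/|z|$, density $\le\epsilon'$) and $S_2=\operatorname{div}\!\big((F-v_j)\,\nu\,\mathcal{H}^1|_{\{|w|=1\}}\big)$, hence $\Delta u_1=\varphi_\delta\ast L_0+\varphi_\delta\ast S_1+\varphi_\delta\ast S_2$. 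Using $\Delta v_j=-(\alpha_j+1)^2e^{2v_j}$, $\Delta F=-(\beta_1+1)^2e^{2F}$ and the Lipschitz bound of Proposition~\ref{prop:u_0}(6) (so $e^{2v_j(w)},e^{2F(w)}=e^{2u_1(z)}(1+O(\delta+\epsilon'))$ on $B_\delta(z)$), the main term becomes
\[
-e^{-2u_1(z)}\,(\varphi_\delta\ast L_0)(z)=(1+O(\delta+\epsilon'))\big[\lambda(z)\,(\alpha_j+1)^2+(1-\lambda(z))\,(\beta_1+1)^2\big],\quad \lambda(z):=\!\!\int_{|w|<1}\!\!\varphi_\delta(z-w)\,dw\in[0,1],
\]
a convex combination of the two curvatures up to a controllable error, hence in $[(\beta_1+1)^2-\epsilon/2,(\alpha+1)^2+\epsilon/2]$ for $\delta,\epsilon'$ small and $j$ large. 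The surface terms I would estimate crudely: with $\|\varphi_\delta\|_\infty\le C\delta^{-2}$, $\|\nabla\varphi_\delta\|_\infty\le C\delta^{-3}$ from (\ref{eq:cutoff}) and $\mathcal{H}^1(\{|w|=1\}\cap B_\delta(z))\le C\delta$, one has $|\varphi_\delta\ast S_1|\le C\epsilon'\delta^{-1}$ and $|\varphi_\delta\ast S_2|\le C\epsilon'\delta^{-2}$, which after multiplication by $e^{-2u_1}\le e^{-2m}$ are each $\le\epsilon/4$ as soon as $\epsilon'\le c(\alpha,\beta_1)\,\epsilon\,\delta^2$. Summing the three contributions gives claim (3). (For $n\le2$ the same argument applies with $\epsilon'=0$, the glued football being $C^{1,1}$.) The order of choices is: fix $\epsilon'\le1$ for the uniform $C^3$ bounds; choose $\delta$ small (controlling $\varphi_\delta\ast L_0$ and the estimates of (2) and the first case of (3)); shrink $\epsilon'$ to $\le c(\alpha,\beta_1)\,\epsilon\,\delta^2$; finally take $j\ge J(\epsilon')$.

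The step I expect to be the main obstacle is controlling $\varphi_\delta\ast S_2$: the \emph{value} jump of $u_0$ across $\{|z|=1\}$ enters the Laplacian as a derivative of a surface measure and so blows up like $\epsilon'/\delta^2$, the worst of all the error terms. The argument closes only because the Proposition allows $\epsilon'$ (and the number of glued conic points, i.e.\ $j$) to be chosen \emph{after} $\delta$, so that $\epsilon'\ll\delta^2$ and the defect $|\alpha_j-\alpha|$ are both absorbed into $\epsilon$; a related point needing care is that all the $C^2$/$C^3$ mollification estimates be uniform in $j$, which is ensured by Proposition~\ref{prop:u_0}(6) and the $C^3$-closeness of $v_j$ to $v_\infty$.
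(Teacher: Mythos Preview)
Your argument is correct and follows essentially the same route as the paper: both reduce part~(3) to bounding the boundary contributions across $\{|w|=1\}$ (your surface terms $S_1,S_2$ are exactly the paper's integrals of $|\nabla\varphi|\,D^0_w(u_0)$ and $\varphi\,D^1_w(u_0)$ over the arc $\{|w|=1\}\cap B_\delta(z)$), obtaining errors of order $\epsilon'/\delta^2$ and $\epsilon'/\delta$, and then absorb them by choosing $\epsilon'$ small relative to $\delta$ (the paper sets $\epsilon'=\delta^3$, you take $\epsilon'\le c\,\epsilon\,\delta^2$). The only cosmetic difference is that you express the main term $\varphi_\delta\ast L_0$ as a convex combination of the two curvatures, whereas the paper simply invokes the two-sided bound of Proposition~\ref{prop:u_0}(\ref{enu:initialbound}).
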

\begin{proof}
The proof of parts (1) and (2) of Proposition \ref{prop:u_1} is standard.
We just need to prove part (\ref{enu:initialbound}).

Notice that
\[
\nabla_{z}\varphi(z-w)=-\nabla_{w}\varphi(z-w),
\]
 we have the following inequalities for $|z|\geq5/8$,
\begin{equation}
|u_{1}(z)-u_{0}(z)|=|\int\varphi(z-w)(u_{0}(z)-u_{0}(w))dv_{w}|\leq M_{1}\delta,
\end{equation}
\begin{align}
|\Delta u_{1}(z)-\int\varphi(z-w)\Delta u_{0}(w)dv_{w}|\nonumber \\
\leq\intop_{\{|w|=1,|w-z|\leq\delta\}}|\nabla\varphi(z-w)|D_{w}^{0}(u_{0})+\varphi D_{w}^{1}(u_{0}) & \leq2\pi\delta(\frac{C\epsilon'}{\delta^{3}}+\frac{C\epsilon'}{\delta^{2}}).\nonumber \\
\label{eq:add9}
\end{align}
 Now we can choose $\epsilon'=\delta^{3}$ in (\ref{eq:add9}) to
get
\begin{equation}
|\Delta u_{1}(z)-\int\varphi(z-w)\Delta u_{0}(w)dv_{w}|\leq4\pi C\delta.
\end{equation}
Notice that if $|z-w|\leq\delta<1/8$, $|z|\geq3/4,$ we have $|w|\geq5/8$,
and
\[
|u_{0}(z)-u_{1}(w)|\leq|u_{0}(w)-u_{1}(w)|+|u_{0}(w)-u_{0}(z)|\leq2M_{1}\delta.
\]
For $3/4\leq|z|\leq5/4$, $|z|\neq1$,
\begin{align}
-e^{-2u_{1}(z)}\Delta u_{1}(z) & \leq e^{-2u_{1}(z)}[(\int\varphi(z-w)(\alpha+1)^{2}e^{2u_{0}(w)})+4\pi C\delta]\nonumber \\
 & \leq4\pi Ce^{-2m}\delta+(\alpha+1)^{2}e^{4M_{0}\delta}.
\end{align}
Similarly, we can prove that
\[
-e^{-2u_{1}(z)}\Delta u_{1}(z)\geq(\beta_{1}+1)^{2}e^{-4M_{0}\delta}-4\pi Ce^{-2m}\delta.
\]
Thus for any given $\epsilon>0,$ we can choose a proper $\delta<<1$
to get (\ref{enu:key-1}) for $3/4\leq|z|\leq5/4$, $|z|\neq1$. Now
that $u_{1}$ is smooth for $3/4\leq|z|\leq5/4$, we can thus extend
this estimate to get (3).
\end{proof}
Finally, we describe our smooth conic metric whose curvature pinching
is arbitrarily close to $\rho_{0}.$ Define a cut-off function $\chi\in C^{\infty}(\mathbb{C})$
such that
\begin{enumerate}
\item
\[
\chi(z)=\begin{cases}
1, & z\in R_{1}=\{3/4\leq|z|\leq5/4\};\\
\\
0, & z\in R_{2}=\{|z|<5/8,\mathrm{\,or}\,|z|>11/8\};
\end{cases}
\]

\item
\begin{equation}
0\leq\chi(z)\leq1,\,|\nabla\chi|<16,\,|\Delta\chi|\leq256,\;|z|\in\mathbb{C}\backslash(R_{1}\cup R_{2}).\label{eq:chi}
\end{equation}

\end{enumerate}
We define the following function
\begin{equation}
u_{2}(z)=\chi u_{1}+(1-\chi)u_{0}.
\end{equation}
We prove the following
\begin{prop}
\label{prop:u_2}Function $u_{2}$ satisfies the following:
\begin{enumerate}
\item $u_{2}$ is smooth away from $z_{2},\cdots,z_{n}\in\{|z|<1/4\}$;
\item $u_{2}\sim\beta_{i}\ln|z-z_{i}|$ as $z\to z_{i}$, for $i=2,\cdots,n$;
\item $u_{2}\sim-(\beta_{1}+2)\ln|z|$ as $|z|\to z_{1}=\infty$;
\item In the region where $u_{2}$ is smooth, there exists a constant $C"=C"(\alpha,\beta_{1})>0$
such that
\begin{equation}
(\beta_{1}+1)^{2}-C"\epsilon\leq-e^{-2u_{2}}\Delta u_{2}\leq(\alpha+1)^{2}+C"\epsilon;\label{eq:last}
\end{equation}

\end{enumerate}
\end{prop}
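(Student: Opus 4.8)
The plan is to verify the four claimed properties of $u_2 = \chi u_1 + (1-\chi)u_0$ by tracking where $\chi$ takes the values $0$, $1$, or something in between, and invoking the corresponding estimates for $u_0$ and $u_1$. Properties (1), (2), (3) are essentially free: on $R_2 = \{|z|<5/8\}\cup\{|z|>11/8\}$ we have $\chi\equiv 0$, so $u_2 = u_0$ there, hence $u_2$ inherits the conic asymptotics of $u_0$ near $z_2,\dots,z_n$ (which by Proposition \ref{prop:u_0}(1) all lie in $\{|z|<1/4\}\subset R_2$) and near $z_1=\infty$; moreover $u_0$ is smooth away from $\{|z|=1\}$ and $u_1$ is smooth on $\{|z|\geq 5/8\}$ by Proposition \ref{prop:u_1}(1), while $\chi$ is smooth, so on the transition annuli $\{5/8\leq|z|\leq 3/4\}$ and $\{5/4\leq|z|\leq 11/8\}$ (where both $u_0$ and $u_1$ are smooth) $u_2$ is a smooth combination, and on $R_1=\{3/4\leq|z|\leq5/4\}$ we have $\chi\equiv 1$ so $u_2=u_1$, which is smooth there. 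The only non-smoothness left is at $z_2,\dots,z_n$.

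The substance is property (4), the curvature pinching bound \eqref{eq:last}, which I would prove region by region. On $R_1$, $u_2=u_1$ and the bound is exactly Proposition \ref{prop:u_1}(\ref{enu:key-1}) (with $C''=1$). On $R_2$, $u_2=u_0$ and the bound is Proposition \ref{prop:u_0}(\ref{enu:initialbound}) wherever $u_0$ is smooth, i.e. $|z|\neq 1$; but on $R_2$ we are away from $\{|z|=1\}$ anyway, so this is immediate with no $\epsilon$ loss. The real work is in the two transition annuli, say $A=\{5/8\leq|z|\leq 3/4\}$ (the annulus $\{5/4\leq|z|\leq 11/8\}$ being symmetric). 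There $u_2 = \chi u_1 + (1-\chi)u_0$, and I would write $\Delta u_2 = \chi \Delta u_1 + (1-\chi)\Delta u_0 + 2\nabla\chi\cdot\nabla(u_1-u_0) + (\Delta\chi)(u_1-u_0)$. The point is that $A\subset\{1/4\leq ||z|-1|\leq 3/8\}$, so by Proposition \ref{prop:u_1}(\ref{C_2}) we have $\|u_1-u_0\|_{C^2}\leq\epsilon$ on $A$; combined with the bounds $|\nabla\chi|<16$, $|\Delta\chi|\leq 256$ from \eqref{eq:chi}, the two correction terms are bounded by a constant times $\epsilon$, while $\chi\Delta u_1 + (1-\chi)\Delta u_0$ is a convex combination that, using $-\Delta u_0 \in [(\beta_1+1)^2, (\alpha+1)^2]e^{2u_0}$, $-\Delta u_1 \in [(\beta_1+1)^2-\epsilon,(\alpha+1)^2+\epsilon]e^{2u_1}$, and $|u_1-u_0|\leq\epsilon$ (so $e^{2u_0}$ and $e^{2u_1}$ differ by a factor $e^{O(\epsilon)}$), lies between $(\beta_1+1)^2e^{2u_2}$ and $(\alpha+1)^2 e^{2u_2}$ up to a multiplicative $e^{O(\epsilon)}$ and an additive $O(\epsilon)$ error. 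Dividing through by $e^{2u_2}$ — which is bounded above and below on the fixed annulus $A$ by constants depending only on $\alpha,\beta_1$ via Proposition \ref{prop:u_0}(6) and $|u_1-u_0|\leq\epsilon$ — converts everything into the additive form $(\beta_1+1)^2 - C''\epsilon \leq -e^{-2u_2}\Delta u_2 \leq (\alpha+1)^2 + C''\epsilon$ with $C''$ depending only on $\alpha,\beta_1$.

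The main obstacle, then, is purely bookkeeping: keeping the constant $C''$ independent of $\epsilon$ and $\delta$ while several estimates are chained together. The subtlety is the order of quantifiers — Proposition \ref{prop:u_1} fixes $\epsilon'$ and $\delta$ in terms of $\epsilon$, and Proposition \ref{prop:u_0} produces $u_0$ depending on $\epsilon'$; one must check that the bounds $m_0, M_0, M_1$ and the implied bounds on $e^{\pm 2u_2}$ on the compact annuli are uniform (they depend only on $\alpha,\beta_1$ by Proposition \ref{prop:u_0}(6), which is exactly why that item was recorded). Once that is in hand, collecting the worst constant over the finitely many regions and renaming it $C''$ finishes the proof. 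I do not anticipate any genuine analytic difficulty beyond this.

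\begin{proof}
Properties (1)--(3) follow directly from the construction. By Proposition \ref{prop:u_0}(1) the points $z_2,\dots,z_n$ lie in $\{|z|<1/2\}$, and by passing to a suitable $w_j$ we arranged $z_2,\dots,z_n\in\{|z|<1/4\}\subset R_2$; there $\chi\equiv 0$, so $u_2=u_0$ near each $z_i$ and near $\infty$, giving (2) and (3). For smoothness, on $R_1$ we have $u_2=u_1$, which is smooth by Proposition \ref{prop:u_1}(1); on $R_2$ we have $u_2=u_0$, smooth away from $\{|z|=1\}$, which $R_2$ avoids; and on the transition annuli $\{5/8\le|z|\le 3/4\}\cup\{5/4\le|z|\le 11/8\}$ both $u_0$ and $u_1$ are smooth (the former since $|z|\neq 1$ there, the latter by Proposition \ref{prop:u_1}(1)), and $\chi$ is smooth, so $u_2$ is smooth. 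Hence $u_2$ is smooth away from $z_2,\dots,z_n$.

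It remains to prove (4). On $R_1$, $u_2=u_1$ and \eqref{eq:last} is Proposition \ref{prop:u_1}(\ref{enu:key-1}). On $R_2$, $u_2=u_0$ and \eqref{eq:last} is Proposition \ref{prop:u_0}(\ref{enu:initialbound}). It remains to treat the transition annuli; we argue on $A:=\{5/8\le|z|\le 3/4\}$, the other being identical. Since $A\subset\{1/4\le||z|-1|\le 3/8\}$, Proposition \ref{prop:u_1}(\ref{C_2}) gives $\|u_1-u_0\|_{C^2(A)}\le\epsilon$. Write
\[
\Delta u_2 = \chi\,\Delta u_1 + (1-\chi)\,\Delta u_0 + 2\nabla\chi\cdot\nabla(u_1-u_0) + (\Delta\chi)(u_1-u_0).
\]
By \eqref{eq:chi}, $|2\nabla\chi\cdot\nabla(u_1-u_0)+(\Delta\chi)(u_1-u_0)|\le (32+256)\epsilon$ on $A$. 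By Proposition \ref{prop:u_0}(\ref{enu:initialbound}) and Proposition \ref{prop:u_1}(\ref{enu:key-1}),
\[
(\beta_1+1)^2 e^{2u_1}-\epsilon e^{2u_1}\le -\Delta u_1\le (\alpha+1)^2 e^{2u_1}+\epsilon e^{2u_1},\quad (\beta_1+1)^2 e^{2u_0}\le -\Delta u_0\le (\alpha+1)^2 e^{2u_0},
\]
and since $|u_1-u_0|\le\epsilon$ we have $e^{2u_i}\le e^{2\epsilon}e^{2u_2}$ and $e^{2u_i}\ge e^{-2\epsilon}e^{2u_2}$ for $i=0,1$, because $u_2$ is a convex combination of $u_0,u_1$. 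Hence
\[
-\chi\Delta u_1 - (1-\chi)\Delta u_0 \le (\alpha+1)^2 e^{2\epsilon}e^{2u_2} + \epsilon e^{2\epsilon}e^{2u_2},
\]
and similarly from below with $(\beta_1+1)^2$. Combining and dividing by $e^{2u_2}$, and using that on the fixed annulus $A$ Proposition \ref{prop:u_0}(6) together with $|u_1-u_0|\le\epsilon$ bounds $e^{-2u_2}$ by a constant $C_A=C_A(\alpha,\beta_1)$, we obtain
\[
(\beta_1+1)^2 - C''\epsilon \le -e^{-2u_2}\Delta u_2 \le (\alpha+1)^2 + C''\epsilon
\]
on $A$, where $C''=C''(\alpha,\beta_1)$ absorbs the factors $e^{\pm 2\epsilon}$, the constant $288$, $C_A$, and $(\alpha+1)^2$; here we shrink $\epsilon$ if necessary so that $e^{2\epsilon}\le 2$. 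Taking the largest such constant over $R_1$, $R_2$ and the two transition annuli yields \eqref{eq:last} on the whole region where $u_2$ is smooth.
\end{proof}
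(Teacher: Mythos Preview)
Your proof is correct and follows the same region-by-region strategy as the paper: reduce (1)--(3) to $u_2=u_0$ on $R_2$ and $u_2=u_1$ on $R_1$, and for (4) handle the transition annuli by expanding $\Delta u_2$ and using the $C^2$-closeness of $u_1$ to $u_0$ there. One small slip: on $A=\{5/8\le|z|\le 3/4\}$ you invoke Proposition~\ref{prop:u_1}(\ref{enu:key-1}) to bound $-\Delta u_1$, but that item is stated only for $R_1=\{3/4\le|z|\le 5/4\}$; on $A$ you should instead use Proposition~\ref{prop:u_1}(\ref{C_2}) to get $|\Delta u_1-\Delta u_0|\le C\epsilon$ and combine with the $u_0$ bound. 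The paper does precisely this in a slightly cleaner form, writing $u_2-u_0=\chi(u_1-u_0)$ and bounding $|\Delta(u_2-u_0)|\le C'\epsilon$ in one stroke, then comparing $e^{-2u_2}\Delta u_2$ directly with $e^{-2u_0}\Delta u_0$.
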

\begin{proof}
Since $u_{2}(z)=u_{0}(z)$ for $z\in R_{2},$ and $u_{2}=u_{1}$ for
$z\in R_{1},$ parts (1), (2) and (3) are obvious. Apply (\ref{enu:key-1})
of Proposition \ref{prop:u_1}, we can prove (4) for $z\in R_{1}\cup R_{2}$.

Thus, to prove Proposition \ref{prop:u_2}, we just need to verify
part (4) for $z\in R=\mathbb{C}\backslash(R_{1}\cup R_{2})=\{5/8\leq|z|\leq3/4,\:\mathrm{{or}\:}5/4\leq|z|\leq11/8\}$.
Notice that $u_{2}-u_{0}=\chi(u_{1}-u_{0})$, thus by (\ref{C_2})
and (\ref{eq:chi}), we have a finite constant $C'>0$, for any $z\in R$
such that $|z|\neq1$,
\[
|\Delta(u_{2}-u_{0})|=|\chi\Delta(u_{1}-u_{0})+2\nabla\chi\nabla(u_{1}-u_{0})+(\Delta\chi)(u_{1}-u_{0})|\leq C'\epsilon,
\]
which leads to
\[
|e^{-2u_{2}}\Delta u_{2}-e^{-2u_{0}}\Delta u_{0}|\leq e^{-2u_{2}}|\Delta(u_{2}-u_{0})|+|(e^{-2u_{0}}-e^{-2u_{2}})\Delta u_{0}|
\]
\[
\leq e^{-2m+2\epsilon}C'\epsilon+|e^{-2u_{0}}\Delta u_{0}(1-e^{-2(u_{2}-u_{0})})|\leq C"\epsilon.
\]
The last inequality follows from the fact that $-e^{-2u_{0}}\Delta u_{0}(z)=(\beta_{1}+1)^{2}$
or $(\alpha+1)^{2}$ for $z\in R$. Thus we have proved part (4) in
$R$.

Combine Propositions \ref{prop:u_0}, \ref{prop:u_1} and \ref{prop:u_2},
we have effectively proved Theorem \ref{thm:construction}.
\end{proof}
Theorem \ref{mainthm} thus easily follows Proposition \ref{prop:keyestimate}
and Theorem \ref{thm:construction}.
\end{proof}

\section{Convergence}

In this section, we prove Theorem \ref{thm:converging}.
Tracing the proof of Proposition \ref{prop:keyestimate}, it is easy
to check that if the best constant $\rho_{0}$ is achieved by a conformal
factor $u$, level sets of $u$ are concentric round circles; thus,
$u$ has to be radially symmetric. A bit further computation shows
that $u$ has to be the conformal factor of a glued football, which
is $C^{1,1}$ and piece-wise smooth away from one or two singular
points. This fact was pointed out by Chen-Lin \cite{ChLi} and Bartolucci
\cite{B} for the single and double singular points cases respectively
using the symmetric rearrangement argument. In multiple conic points cases, equality case of Proposition
\ref{prop:keyestimate} cannot be expected. We will do a finer analysis on
the isoperimetric defect to show that all but one of singular points merge to one conic
point when the best pinching constant $\rho_{0}$ is approximated. In
\cite{FL}, we have described exactly this kind of merging behavior
for conic 2-spheres with constant curvature metrics. We thus follow
the arguments given in \cite{FL}, pointing out only the necessary
modification for the supercritical case.

First, we prove a technical lemma.
\begin{lem}
\label{lem:technical}Let $u\in C^{\infty}(\Omega)$ be a solution
of the Dirichlet problem in a bounded region $\Omega\subset\mathbb{C}$,
\[
\begin{cases}
\Delta u=-Ke^{2u}, & \text{in \ensuremath{\Omega};}\\
u=s, & \text{on \ensuremath{\partial\Omega},}
\end{cases}
\]
where $K$ is a positive continuous function with $0<a\leq K\leq b$. Let
$\Omega_{t}:=\{u>t\}\subset\Omega$, $A(t)=\intop_{\Omega_{t}}Ke^{2u}$,
$B(t)=|\Omega_{t}|,$ and $H=\max_{z\in\Omega}u(z),$ then
\[
A(t)\geq\frac{4a\pi}{b}(1-e^{t-H}),
\]
moreover for $A(t)\geq\frac{2a\pi}{b},$ we have, for any $t\geq s,$
\[
B(t)\geq\frac{4a\pi}{b^{2}}(e^{-t-H}-e^{-2H}).
\]
\end{lem}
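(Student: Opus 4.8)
The plan is to mimic the differential-inequality scheme from the proof of Proposition~\ref{prop:keyestimate}, but now run it \emph{forward} from the maximum point rather than backward from $t_0$, so that we extract lower bounds on $A(t)$ and $B(t)$ instead of upper bounds. First I would record the two pointwise inequalities already established: from the mean-value-theorem argument, $ae^{2t}B'(t)\le A'(t)\le be^{2t}B'(t)$ for a.e.\ $t$ (note $B'\le 0$), and from the isoperimetric inequality combined with H\"older, $4\pi B(t)\le -B'(t)\,(A(t)-2\pi\alpha)$; in the present Dirichlet setting there is no conic point inside $\Omega_t$ for $t\ge s$, so the $2\pi\alpha$ term is absent and one gets simply $-B'(t)\ge 4\pi B(t)/A(t)$. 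Combining these with $A'(t)\le be^{2t}B'(t)$ gives a closed differential inequality for $A$ alone: $A'(t)\ge be^{2t}B'(t)\ge -\,be^{2t}\cdot 4\pi B(t)/A(t)\ge -\,(b/a)\cdot 4\pi A'(t)/A(t)\cdot(\dots)$ — more carefully, substituting $e^{2t}B'(t)\ge A'(t)/b$ into $4\pi B\le -B'(A)$ and then $B\ge A/(b e^{2t})\cdot(\text{something})$. The cleanest route: from $A'\ge b e^{2t}B'$ and $e^{2t}\ge$ nothing useful, instead use $e^{2t}B'(t)\le A'(t)/a$ (rearranged), i.e.\ $B'(t)\le e^{-2t}A'(t)/a$, together with the isoperimetric bound to get $4\pi B\le -B'(A-2\pi\alpha)\le -e^{-2t}A'(A)/a$ (taking $\alpha=0$), hence $A'(t)\le -4a\pi e^{2t}B(t)/A(t)$; and since $A'\le be^{2t}B'$ forces $B(t)\ge A'(t)/(be^{2t})\cdot$\dots. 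I would set $\phi(t)=A(t)$ and derive $\phi'\ge -\,c\,\phi'/\phi$ type relations, integrate from $H$ downward, and use $A(H)=0$, $B(H)=0$ to fix the constant, arriving at $A(t)\ge \tfrac{4a\pi}{b}\bigl(1-e^{t-H}\bigr)$.

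For the second inequality, once the lower bound on $A$ is in hand, I would feed it back into the relation $ae^{2t}B'(t)\le A'(t)$, i.e.\ $B'(t)\ge e^{-2t}A'(t)/b$, and also use $A'(t)\le be^{2t}B'(t)$ rearranged as $B'(t)\ge A'(t)/(be^{2t})$; integrating $B'(t)=-\int_t^{?}$ against the already-known behavior of $A$ near $t=H$. More precisely, from $A(t)\ge \tfrac{4a\pi}{b}(1-e^{t-H})$ and the pointwise bound $A'(t)\le a e^{2t}B'(t)$ one gets $B'(t)\ge A'(t)/(a e^{2t})$, and integrating this from $t$ up to $H$ (where $B(H)=0$, $A(H)=0$) together with an integration by parts moving $e^{-2t}$ across $A'$ yields $B(t)\ge \tfrac{1}{a}\int_t^H e^{-2\tau}(-A'(\tau))\,d\tau$; the hypothesis $A(t)\ge 2a\pi/b$ is exactly what guarantees the relevant sign/monotonicity so that the elementary estimate $-A'(\tau)\ge$ (a multiple of $e^{-\tau}$ via the isoperimetric term) survives, producing $B(t)\ge \tfrac{4a\pi}{b^2}\bigl(e^{-t-H}-e^{-2H}\bigr)$.

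The main obstacle I anticipate is bookkeeping the signs and the direction of integration correctly: $A$ and $B$ are nonincreasing, $A'(t),B'(t)\le 0$, the integrations run from a generic $t$ up to the top value $H$, and the boundary condition $u=s$ on $\partial\Omega$ means the formulas are only asserted for $t\ge s$. One must also be careful that the isoperimetric-inequality step requires $\Omega_t$ to be a genuine subdomain with $C^1$ boundary for a.e.\ $t$ (Sard), and that the $\alpha$-term truly drops out here because the conic points lie outside the bounded region $\Omega$ in the intended application (or, if $\Omega$ does contain them, $\alpha$ should be carried along and set to zero only at the end — I would state the lemma for the $\alpha=0$ case matching the hypothesis as written). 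Finally, the constant-chasing to land exactly on $\tfrac{4a\pi}{b}$ and $\tfrac{4a\pi}{b^2}$ rather than something weaker will force using the sharp isoperimetric constant $4\pi$ and the sharp pointwise ratio $b$ (resp.\ $a$) at each step without slack; I would organize the computation so that each inequality is applied exactly once in the direction that preserves sharpness.
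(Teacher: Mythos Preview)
Your plan correctly identifies the two building blocks---the pointwise comparison $be^{2t}B'\le A'\le ae^{2t}B'$ and the isoperimetric/H\"older bound $4\pi B\le -B'A$ (with no $2\pi\alpha$ term here, since $\Omega$ contains no singular point)---and chaining them you do reach $AA'\le -4a\pi e^{2t}B$, which is exactly the paper's inequality (\ref{eq:add8}). But from that point your outline has a genuine gap. The relation ``$\phi'\ge -c\,\phi'/\phi$'' is not meaningful as written, and integrating $(A^2)'\le -8a\pi e^{2t}B$ alone does not close: $B$ still sits on the right. What the paper does next is produce a \emph{second} integrated relation, by integrating $be^{2t}B'\le A'$ by parts on $[t,H]$:
\[
-A(t)\ \ge\ -2b\int_t^{H} e^{2\mu}B(\mu)\,d\mu\ -\ be^{2t}B(t).
\]
Combining this with $A^2(t)\ge 8a\pi\int_t^{H} e^{2\mu}B\,d\mu$ (the integral of (\ref{eq:add8})) eliminates the integral and yields the pointwise algebraic inequality
\[
\frac{b}{4a\pi}A^{2}\ \ge\ A - be^{2t}B.
\]
Feeding $AA'\le -4a\pi e^{2t}B$ back in now gives the closed linear ODE $A'\le A-\tfrac{4a\pi}{b}$, and integrating this down from $H$ (where $A(H)=0$) produces $A(t)\ge \tfrac{4a\pi}{b}(1-e^{t-H})$. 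This intermediate elimination step is the idea your sketch is missing.

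Your plan for the $B$-estimate contains a sign error. From $A'\le ae^{2t}B'$ you correctly get $B'\ge e^{-2t}A'/a$, but integrating from $t$ to $H$ gives $-B(t)\ge \tfrac1a\int_t^{H} e^{-2\tau}A'(\tau)\,d\tau$, i.e.\ an \emph{upper} bound $B(t)\le \tfrac1a\int_t^{H} e^{-2\tau}(-A')\,d\tau$, not the lower bound you assert. (Using the other side, $B'\le e^{-2t}A'/b$, does give a lower bound, but then the integration-by-parts route requires an \emph{upper} bound on $A$, which you do not have.) The paper sidesteps this entirely: it reads the $B$-bound directly off the algebraic inequality displayed above, rearranged as $B(t)\ge \tfrac1b e^{-2t}A\bigl(1-\tfrac{b}{4a\pi}A\bigr)$, and then substitutes the already-proved lower bound on $A$.
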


\begin{proof}
We are in a similar but simpler set up as that of Proposition \ref{prop:keyestimate},
as no singularity appears here. In particular, similar to (\ref{eq:1}),
we have
\begin{equation}
be^{2t}B'\leq A'\leq ae^{2t}B'.\label{eq:add6}
\end{equation}
Thus we follow the proof of Lemma 3.4 of \cite{FL} to get
\begin{equation}
(A^{2})'=2(\intop_{\partial\Omega(t)}|\nabla u|)(-e^{2t}\intop_{\partial\Omega(t)}\frac{K}{|\nabla u|})\leq-2e^{2t}a|\partial\Omega(t)|^{2}\leq-8a\pi e^{2t}B.\label{eq:add8}
\end{equation}
We integrate (\ref{eq:add8}) from $t$ to $H$ to get
\begin{equation}
A^{2}(t)\geq8a\pi\intop_{t}^{H}e^{2\mu}B(\mu)d\mu.\label{eq:add7}
\end{equation}
On the other hand, integrating (\ref{eq:add6}), we get
\begin{align}
-A(t) & \geq-\intop_{t}^{H}2be^{2\mu}B(\mu)d\mu-be^{2t}B(t).\label{eq:add5}
\end{align}
Thus we combine (\ref{eq:add7}) and (\ref{eq:add5}) to get
\begin{align}
\frac{b}{4a\pi}A^{2} & \geq A-be^{2t}B\label{eq:add13}\\
 & \geq\frac{b}{4a\pi}AA'+A,
\end{align}
 which leads to
\[
A(t)\geq\frac{4a\pi}{b}(1-e^{t-H}).
\]
When $A\geq\frac{2a\pi}{b},$ by (\ref{eq:add13}),
\[
B(t)\geq\frac{1}{b}e^{-2t}A(1-\frac{b}{4a\pi}A)\geq\frac{4a\pi}{b^{2}}[e^{-t-H}-e^{-2H}].
\]
\end{proof}
\begin{rem}
As in \cite{FL}, Lemma \ref{lem:technical} is used to show the uniform
upper bound for conformal factors in consideration. A more general
form of such estimates has been obtained by Brezis-Merle \cite{BM}.
\end{rem}
We can now start the proof of Theorem \ref{thm:converging}.
\begin{proof}
We write
\[
g_{i}=e^{2u_{i}}g_{0},
\]
which has conic singularity along the divisor $D.$ We will normalize
$u_{i}$ later. Following notations of our earlier works \cite{FL},
we define
\begin{align*}
\Omega_{i}(t): & =\{u_{i}>t\}\subset\mathbb{C},\ \ \ \ \ A_{i}(t):=\int_{\Omega_{t}}Ke^{2u_{i}},\ \ \ \ \ B_{i}(t):=|\Omega_{i}(t)|,
\end{align*}
where all integrals are with respect to the Euclidean metric $g_{0}$.
Note that, under our set-up, the Gauss-Bonnet formula can be written
as
\begin{equation}
\int_{\mathbb{R}^{2}}K(g_{i})e^{2u}=2\pi(2+|D|)=\lim_{t\to-\infty}A_{i}(t);\label{eq:limitGB-1}
\end{equation}
while we also have that
\[
A_{i}(t)=\int_{\Omega_{i}(t)}Ke^{2u_{i}}=\int_{\partial\Omega_{i}(t)}|\nabla u_{i}|+2\pi\alpha,
\]
 for any $t>-\infty$.

According to the proof in Proposition~\ref{prop:keyestimate}, we have
\begin{equation}
B_{i}'(t)=-\int_{\partial\Omega_{i}(t)}\frac{1}{|\nabla u_{i}|},\quad A'(t)=-e^{2t}\int_{\partial\Omega_{i}(t)}\frac{K(g_{i})}{|\nabla u_{i}|},\label{eq:add14}
\end{equation}
for $t$ almost everywhere.

Assuming that
\begin{equation}
a_{i}=\min K(g_{i})\leq K(g_{i})\leq b_{i}=\max K(g_{i}). \label{eq:add15}
\end{equation}
By adding a proper constant to each $u_{i}$, we may assume $b_{i}=1$,
which leads to
\begin{equation}
a_{i}\to\rho_{0}=\frac{(1+\beta_{1})^{2}}{(1+\alpha)^{2}},\label{eq:add16}
\end{equation}
 as $i\to\infty.$ Thus, combining (\ref{eq:add14}) and (\ref{eq:add15}),
we get
\begin{equation}
1\leq\frac{e^{2t}B_{i}'(t)}{A_{i}'(t)}\leq\frac{(1+\alpha)^{2}}{(1+\beta_{1})^{2}}.\label{eq:1-1}
\end{equation}
Also by the isoperimetric inequality and the Hölder's inequality,
we have
\begin{align}
4\pi B_{i}(t)\leq(\int_{\partial\Omega_{i}(t)}1)^{2}\leq\int_{\partial\Omega_{i}(t)}|\nabla u_{i}|\int_{\partial\Omega_{i}(t)}\frac{1}{|\nabla u_{i}|}=-B_{i}'(t)(A_{i}(t)-2\pi\alpha).\label{2-1}
\end{align}
Similar to the discussion in \cite{FL}, due to the non compact conformal
transformation group, there are two families of normalization that
we can apply to functions $\{u_{i}\}$ without changing the geometric
setting. Namely,
\[
\text{scaling: \ensuremath{u^{\lambda,0}(z):=u(\lambda z)+\ln\lambda;}}
\]
\[
\text{translation: \ensuremath{u^{0,k}(z):=u(z-k).}}
\]
We choose the normalization so that for a generic $t_{0}\in\mathbb{R},$
\begin{align*}
A_{i}(\ln(\alpha+1)) & =2\pi(\alpha+1),\\
\text{\normalcolor the\,centroid\,of} & \text{ \ensuremath{\Omega_{i}(t_{0})} is at 0.}
\end{align*}

Define
\begin{equation}
D_{i}(t)=(\int_{\partial\Omega_{i}(t)}1)^{2}-4\pi B_{i}(t)
\end{equation}
as the isoperimetric defect for the region $\Omega_{i}(t).$ We have
the following improvement of (\ref{eq:2.5}):
\[
4\pi B_{i}(t)+D_{i}(t)\leq-B_{i}'(t)(A_{i}(t)-2\pi\alpha),
\]
which means
\begin{equation}
-\frac{B_{i}(t)}{B_{i}'(t)}\leq\frac{A_{i}(t)-2\pi\alpha}{4\pi}+\frac{D_{i}(t)}{4\pi B_{i}^{'}(t)}.\label{eq:2.5-1}
\end{equation}
Thus, similar to (\ref{eq:add30})
\begin{align}
\frac{d}{dt}[e^{2t}B_{i}(t)] & =e^{2t}B_{i}'(t)+2e^{2t}B_{i}(t)=\frac{e^{2t}B_{i}'(t)}{A_{i}'(t)}A_{i}'(t)(1+\frac{2B_{i}(t)}{B_{i}'(t)})\nonumber \\
\notag & \leq\frac{e^{2t}B_{i}'(t)}{A_{i}'(t)}A_{i}'(t)(1-\frac{A_{i}-2\pi\alpha}{2\pi})-\frac{e^{2t}D_{i}(t)}{2\pi}\nonumber \\
 & \leq\begin{cases}
\frac{A_{i}'}{b_{i}}(1+\alpha-\frac{A_{i}}{2\pi})-\frac{e^{2t}D_{i}(t)}{2\pi}, & \;A_{i}\leq2\pi(1+\alpha);\\
\\
\frac{A_{i}'}{a_{i}}(1+\alpha-\frac{A_{i}}{2\pi})-\frac{e^{2t}D_{i}(t)}{2\pi}, & \;A_{i}>2\pi(1+\alpha).
\end{cases}\label{eq:add20}
\end{align}
Integrating (\ref{eq:add20}) from some $t\leq\ln(\alpha+1)$ to $\infty$,
we get
\[
\frac{1}{2\pi}\int_{t}^{\infty}e^{2s}D_{i}(s)ds-e^{2t}B(t)\leq\frac{1}{4a_{i}\pi}A^{2}(t)-\frac{1+\alpha}{a_{i}}A(t)+\pi(1+\alpha)^{2}(\frac{1}{a_{i}}-1).
\]
Takeing a sequence $t_n\to-\infty$ with $e^{2t_n}B(t_n)\to 0$, and noticing (\ref{eq:limitGB-1})
and (\ref{eq:add16}), we then get
\[
\int_{-\infty}^{\infty}e^{2t}D_{i}(t)dt\to0,\ \ \ \ \ \ \ \text{as \ensuremath{i\to\infty.}}
\]
 Thus, away from a set $\mathbb{\mathcal{S}}$ of measure $0$, we have
\[
D_{i}(t)\to0,\qquad i\to\infty.
\]
In particular, we pick our $t_{0}\notin\mathcal{S}$. A similar argument
as given in Lemma 3.5 of \cite{FL} indicates that
\[
A_{i}(t)\to A(t),\ \ \ \ B_{i}(t)\to B(t),
\]
 and all inequalities in the discussion above will be equalities when
passing to the limit. Therefore, by (\ref{eq:2.5-1}) and (\ref{eq:add20}), $A(t)$
and $B(t)$ satisfy the following

\begin{align}
4\pi B(t)= & -B'(t)(A(t)-2\pi\alpha),\label{eq:limit}\\
e^{2t}B(t) & =\begin{cases}
-\frac{A^{2}(t)}{4\rho_{0}^{2}\pi}+\frac{1+\alpha}{\rho_{0}}A(t), & t\geq\ln(\alpha+1);\\
\frac{-1}{4\rho_{0}\pi}A^{2}(t)+\frac{1+\alpha}{\rho_{0}}A(t)-\pi(1+\alpha)^{2}(\frac{1}{\rho_{0}}-1), & t<\ln(\alpha+1),
\end{cases}\\
K(g_{u}) & =\begin{cases}
1, & \qquad t\geq\ln(\alpha+1);\\
\rho_{0,} & \qquad t<\ln(\alpha+1).
\end{cases}
\end{align}
Combining these with proper initial conditions, we can compute $A(t)$
and $B(t)$ precisely. Readers are referred to \cite{FL} for explicit
formulae. It is straight forward to see that they are given by the
corresponding data of the glued football.

We now follow \cite{FL} to prove the Gromov-Hausdorff convergence.
Let $M_{i}(t)$ be the connected component of $\Omega_{i}(t)$with
the largest area. Since $D_{i}(t)\to0$, we apply Benneson's inequality
to get
\[
|M_{i}(t)|\to B(t).
\]
Due to the normalization, the centroid of $\Omega_{i}(t_{0})$ is
0, we conclude that $M_{i}(t_{0})$ converge to a round disc in Gromov-Hausdorff
sense. Indeed, following Lemma 3.9 of \cite{FL}, $M_{i}(t)$ converges
in Hausdorff distance to a disk $D(t)$ for almost every $t$. Let
$p_{0}$ be the limit of center of $D(t)$ as $t\to\infty$. Passing
to a subsequence if necessary, let $p_{1},\cdots,p_{n-1}$ be the
possible limit points of $n-1$ conic points. Consider any compact
set $K\subset\mathbb{C}\backslash\{p_{0},p_{1},\cdots,p_{n-1}\}$.
Following exactly the argument given in \cite{FL}, with Lemma \ref{lem:technical},
we can show that there exist constants $N\in\mathbb{N}$ and a uniform
constant $C_{K}\in\mathbb{R}$, such that $||u_{i}||_{C^{0}(K)}\leq C_{K}$
for $i\geq N$. Thus, by a standard bootstrap argument we have, up
to a subsequence,
\[
u_{i}\longrightarrow u,\text{ }
\]
 in $C^{\infty}(K)$ topology. It follows that the limit $u$ must
be radially symmetric, and its associated $A(t)$ and $B(t)$ are
given by (\ref{eq:limit}). We also have that $p_{0}=O$. A straightforward
computation shows that $u=u_{\alpha,\beta_{1}}$, the conformal factor
of the glued football $S_{\alpha,\beta_{1}}^{2}$.

It is now straightforward to see that $p_{i}=O$ for $i=1,\cdots,n-1$,
which means $n-1$ of the conic points collapse into one.
\end{proof}
The proof of Theorem \ref{thm:critical} follows exactly the same
line of those of Theorems \ref{mainthm} and \ref{thm:converging},
thus we omit it here.

\end{document}